\newtheorem{definition}[equation]{Definition}
\newtheorem{theorem}[equation]{Theorem}
\newtheorem{Lemma}[equation]{Lemma}
\newtheorem{corollary}[equation]{Corollary}
\newtheorem{proposition}[equation]{Proposition}
\DeclareMathOperator{\Vol}{Vol}
\DeclareMathOperator{\Int}{int}
\DeclareMathOperator{\Conv}{Conv}
\DeclareMathOperator{\Area}{Area}
\newcommand{\rom}[1]{\uppercase\expandafter{\romannumeral #1\relax}}
\begin{document}
\bibliographystyle{plain}

\title[]{Upper bounds on packing density for circular cylinders with high aspect ratio}
\author[W\"oden Kusner]{W\"oden Kusner\\Department of Mathematics\\ University of Pittsburgh}
\thanks{Research partially supported by NSF grant 1104102.}
\maketitle

\begin{abstract}
In the early 1990s, A. Bezdek and W. Kuperberg used a relatively simple argument to show a surprising result:  The maximum packing density of circular cylinders of infinite length in $\mathbb{R}^3$ is exactly $\pi/\sqrt{12}$, the planar packing density of the circle.  This paper modifies their method to prove a bound on the packing density of finite length circular cylinders.  In fact, the maximum packing density for unit radius cylinders of length $t$ in $\mathbb{R}^3$ is bounded above by $\pi/\sqrt{12} + 10/t$.
\end{abstract}

\section{Introduction}
The problem of computing upper bounds for the packing density of a specific body in $\mathbb{R}^3$ can be difficult.  Some known or partially understood non-trivial classes of objects are based on spheres \cite{hales2005proof}, bi-infinite circular cylinders \cite{bezdek1990maximum}, truncated rhombic dodecahedra \cite{bezdek1994remark} and tetrahedra \cite{gravel2011upper}.  This paper proves an upper bound for the packing density of congruent capped circular cylinders in $\mathbb{R}^3$. The methods are elementary, but can be used to prove non-trivial upper bounds for packings by congruent circular cylinders and related objects, as well as the sharp bound for half-infinite circular cylinders.

\subsection{Synopsis}
The density bound of A. Bezdek and W. Kuperberg for bi-infinite cylinders is proved in three steps.  Given a packing of $\mathbb{R}^3$ by congruent bi-infinite cylinders, first decompose space into regions closer to a particular axis than to any other.  Such a region contains the associated cylinder, so density may be determined with respect to a generic region. Finally, this region can be sliced perpendicular to the particular axis and the area of these slices estimated: This area is always large compared to the cross-section of the cylinder. 

In the case of a packing of $\mathbb{R}^3$ by congruent finite-length cylinders, this method fails. The ends of a cylinder may force some slice of a region to have small area. For example, if a cylinder were to abut another, a region in the decomposition might not even wholly contain a cylinder.  To overcome this, one shows that these potentially small area slices are always associated to a small neighborhood of the end of a cylinder. For a packing by cylinders of a relatively high aspect ratio, neighborhoods of the end of a cylinder are relatively rare.  By quantifying the rarity of cylinder ends in a packing, and bounding the error contributed by any particular cylinder end, the upper density bound is obtained.  

\subsection{Objects considered}
Define a \emph{$t$-cylinder} to be a closed solid circular cylinder in $\mathbb{R}^3$ with unit radius and length $t.$ Define a \emph{capped $t$-cylinder} (Figure \ref{fig1}) to be a closed set in $\mathbb{R}^3$ composed of a $t$-cylinder with solid unit hemispherical caps.  A capped $t$-cylinder $C$ decomposes into the $t$-cylinder body $C^0$ and two caps $C^1$ and $C^2\negthinspace.$ The axis of the capped $t$-cylinder $C$ is the line segment of length $t$ forming the axis of $C^0\negthinspace.$  The capped $t$-cylinder $C$ is also the set of points at most 1 unit from its axis.

\subsection{Packings and densities}
A $packing$ of $X\subseteq \mathbb{R}^3$ by capped $t$-cylinders is a countable family $\mathscr{C} = \{C_i\}_{i \in I}$ of congruent capped $t$-cylinders $C_i$ with mutually disjoint interiors  and $C_i \subseteq X$.   For a packing $\mathscr{C}$ of $\mathbb{R}^3$, the $restriction$ of $\mathscr{C}$ to $X\subseteq \mathbb{R}^3$ is defined to be a packing of $\mathbb{R}^3$ by capped $t$-cylinders $\{C_i: C_i \subseteq X\}.$ Let $B(R)$ be the closed ball of radius $R$ centered at $0$. In general, let $B_x(R)$ be the closed ball of radius $R$ centered at $x$.   The \emph{density} $\rho(\mathscr{C}, R, R')$ of a packing $\mathscr{C}$ of $\mathbb{R}^3$ by capped $t$-cylinders with $R\le R'$ is defined as 

$$ \rho(\mathscr{C},R, R') \hspace{2pt}=\hspace{-7pt} \sum_{C_i\subseteq B(R)}\frac{\Vol(C_i)}{\Vol(B(R'))}.$$
 
The \emph{upper density} $\rho^+$ of a packing $\mathscr{C}$ of $\mathbb{R}^3$ by capped $t$-cylinders is defined as 

$$\rho^+(\mathscr{C}) = \limsup_{R\rightarrow \infty} \rho(\mathscr{C},R, R).$$

\begin{figure}[h!]
  \centering
      \includegraphics[width=0.50\textwidth]{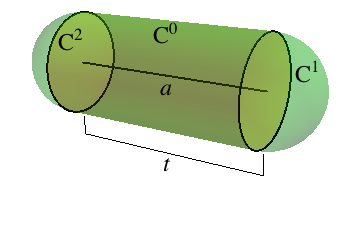}
  \caption{A capped $t$-cylinder with body $C^0$, axis $a$ and caps $C^1$ and $C^2$.}
 \label{fig1}
\end{figure} 

In general, a $packing$ of $X\subseteq \mathbb{R}^3$ by a convex, compact object $K$ is a countable family $\mathscr{K} = \{K_i\}_{i \in I}$ of congruent copies of $K$ with mutually disjoint interiors  and $K_i \subseteq X$.   Restrictions and densities of packings by $K$ are defined in an analogous fashion to those of packings by capped $t$-cylinders.

\section{The Main Results}
Let $t_0 = \frac{4}{3}(\frac{4}{\sqrt{3}}+1)^3 = 48.3266786\dots$ for the remainder of the paper.   This value comes out of the error analysis in Section \ref{part2}.

\begin{theorem}\label{main}  Fix $t \ge 2t_0.$
Fix $R \ge 2/\sqrt{3}.$ Fix a packing $\mathscr{C}$ of $\mathbb{R}^3$ by capped $t$-cylinders. Then  
$$\rho(\mathscr{C},R-2/\sqrt{3},R) \le \frac{t+\frac{4}{3}}{\frac{\sqrt{12}}{\pi}(t-2t_0) + (2t_0)+\frac{4}{3}}.$$
\end{theorem}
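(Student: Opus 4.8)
The plan is to run the Bezdek--Kuperberg slicing argument, and to pay for the hemispherical caps through a localized error term whose total size is controlled by the large aspect ratio $t\ge 2t_0$.

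\textbf{Reduction to a volume estimate.}
Write $\mathscr C=\{C_i\}$ with axes $a_i$, and let
\[
V_i=\{x\in\mathbb R^3:\ d(x,a_i)\le d(x,a_j)\ \text{for all }j\}
\]
be the region of points at least as close to $a_i$ as to any other axis. Since $C_i$ is exactly the set of points within distance $1$ of $a_i$ and the $C_i$ have pairwise disjoint interiors, $C_i\subseteq V_i$. The $V_i$ need not be bounded, so I would clip to
\[
\widetilde V_i=V_i\cap\{x:\ d(x,a_i)\le 2/\sqrt 3\};
\]
as $1<2/\sqrt 3$ we still have $C_i\subseteq\widetilde V_i$, the $\widetilde V_i$ have pairwise disjoint interiors, and $C_i\subseteq B(R-2/\sqrt 3)$ forces $\widetilde V_i\subseteq B(R)$. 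Writing $N$ for the number of $i$ with $C_i\subseteq B(R-2/\sqrt 3)$ and using $\Vol(C_i)=\pi(t+\tfrac43)$,
\[
\rho\bigl(\mathscr C,R-\tfrac{2}{\sqrt3},R\bigr)=\frac{N\,\pi(t+\tfrac43)}{\Vol(B(R))}\ \le\ \frac{N\,\pi(t+\tfrac43)}{\sum_{C_i\subseteq B(R-2/\sqrt3)}\Vol(\widetilde V_i)}.
\]
Comparing with the stated bound, the theorem reduces to the volume estimate
\[
\sum_{C_i\subseteq B(R-2/\sqrt3)}\Vol(\widetilde V_i)\ \ge\ N\Bigl(\sqrt{12}\,t+\tfrac{4\pi}{3}-2t_0(\sqrt{12}-\pi)\Bigr),
\]
in which $\sqrt{12}\,t+\tfrac{4\pi}{3}$ is the ideal value (each unit of axial length in a slice of area $2\sqrt3=\sqrt{12}$, plus $\tfrac{2\pi}{3}$ for each cap) and $2t_0(\sqrt{12}-\pi)$ is the error we allow per cylinder.

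\textbf{Slicing.}
Fix $i$, put $a_i$ on the $Z$-axis over $\{0\le Z\le t\}$, and slice $\widetilde V_i$ by the planes $\Pi_z=\{Z=z\}$. For $z\le 0$ and for $z\ge t$ the slice contains a hemispherical cap cross-section, so these ranges contribute at least $\tfrac{2\pi}{3}$ apiece. For $0\le z\le t$ the slice contains the unit cross-sectional disk of $C_i$, so $\Area(\widetilde V_i\cap\Pi_z)\ge\pi$; and, exactly as in the bi-infinite case \cite{bezdek1990maximum}, this area is at least $\sqrt{12}$ \emph{provided every nonempty competing cross-section $C_j\cap\Pi_z$ contains a unit disk}. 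A short case analysis of how a plane can meet a capped $t$-cylinder shows that $C_j\cap\Pi_z$ fails to contain a unit disk only when $\Pi_z$ comes within distance $1$ of an end of the body $C_j^0$, i.e.\ only when $\Pi_z$ meets a small neighborhood of a cap $C_j^1$ or $C_j^2$. Hence the only deficient slices of $\widetilde V_i$, those with area $<\sqrt{12}$, occur for $z$ in a union of intervals of bounded length, one for each cap of a cylinder whose axis comes within $2/\sqrt3+1$ of $a_i$.

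\textbf{Error analysis --- the main obstacle.}
This is where $t_0$ is produced and where essentially all of the work lies. Charge each deficient slice to the cap responsible for it. On a deficient slice the area is still at least $\pi$, so the deficit is at most $\sqrt{12}-\pi$ per unit of height, over an interval whose length is bounded by an absolute constant; and the disjointness of the cylinders bounds how many axes can pass within $2/\sqrt3+1$ of a fixed cap. Combining these shows that a single cap is responsible for at most a bounded amount of deficit, and then --- accounting for the cylinders straddling $\partial B(R)$, which is precisely what the $2/\sqrt 3$ buffer in the density is for --- the total deficit over all $i$ is at most $2t_0(\sqrt{12}-\pi)$ per cylinder once $t_0$ is taken to be $\tfrac43\bigl(\tfrac{4}{\sqrt3}+1\bigr)^3=\tfrac1\pi\Vol\bigl(B(1+\tfrac4{\sqrt 3})\bigr)$, the volume of a ball of radius $1+2\cdot\tfrac{2}{\sqrt3}$; the hypotheses $t\ge 2t_0$ and $R\ge 2/\sqrt 3$ are used to keep the two ends of a cylinder from interfering and to keep the asserted bound from being vacuous. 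Subtracting the total deficit from the sum over $i$ of the per-slice bounds gives the displayed volume estimate, hence the theorem. I expect the difficulty to be concentrated entirely here: localizing deficiency to cap neighborhoods, bounding the deficit chargeable to one cap by a constant, and arranging the bookkeeping so that the constant comes out as $t_0$. The first two steps are the Bezdek--Kuperberg argument with only cosmetic modifications.
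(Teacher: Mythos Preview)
Your architecture is the paper's: slice the axis-based Dirichlet cells perpendicular to each axis, show most slices have area at least $\sqrt{12}$, and bound the total axial length of the exceptional set by $2nt_0$ via the volume argument you describe (indeed $t_0=\tfrac{1}{\pi}\Vol\bigl(B(1+\tfrac{4}{\sqrt3})\bigr)$, and the paper obtains it exactly as you do, by packing unit-radius cylinders into the ball of radius $\tfrac{4}{\sqrt3}+1$ about each end). The reduction to a volume estimate, the cap contribution $\tfrac{4\pi}{3}$, and the final arithmetic are all as in the paper; your clipping to the $2/\sqrt3$-tube is harmless and is effectively the paper's first truncation step.

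The gap is in your criterion for which slices are good. You assert that the Bezdek--Kuperberg area bound $\ge\sqrt{12}$ holds ``exactly as in the bi-infinite case'' whenever each nonempty competing cross-section $C_j\cap\Pi_z$ contains a unit disk, and that deficiency therefore localizes to planes meeting a cap. Neither statement is correct as written. The Dirichlet slice is determined by distances to the axis \emph{segments} $a_j$, so a cylinder with $C_j\cap\Pi_z=\emptyset$ can still cut into the slice if an endpoint of $a_j$ lies near $x$ just off the plane; and because the $a_j$ are segments, the bisecting curves in $\Pi_z$ are parabolic \emph{splines} (a parabolic arc continued by rays), not conics, so the bi-infinite argument does not apply verbatim. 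The correct sufficient condition, which the paper proves, is that $B_x(4/\sqrt3)$ contain no axis endpoints. Under that hypothesis the paper shows that all non-analytic boundary points of the slice (the spline transition points, the triple points, and the points on $\partial B(R)$) lie outside $S_x(2/\sqrt3)$, and proves a separating-hyperplane angle lemma bounding the arc of $S_x(2/\sqrt3)$ cut off by any single competitor by $2\arccos(\sqrt3-1)$; only with both of these in hand does the Bezdek--Kuperberg truncation and rearrangement go through. This is substantive, not cosmetic. Once you replace your criterion by ``no ends in $B_x(4/\sqrt3)$'', the rest of your error bookkeeping is exactly the paper's: the bad set is covered by the $2n$ balls $B_e(4/\sqrt3)$ about the ends, each containing axis length at most $t_0$, whence $\mu(Z)\le 2nt_0$ and the volume estimate follows.
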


This is the content of Sections \ref{setup}, \ref{part1}, \ref{part2}.  Notice that for $t \le 2t_0$, the only upper bound provided is the trivial one.

\begin{corollary}
Fix $t\ge 2t_0.$ The upper density of a packing $\mathscr{C}$ of $\mathbb{R}^3$ by capped $t$-cylinders satisfies the inequality  $$\rho^+(\mathscr{C}) \le \frac{t+\frac{4}{3}}{\frac{\sqrt{12}}{\pi}(t-2t_0) + (2t_0)+\frac{4}{3}}.$$
\end{corollary}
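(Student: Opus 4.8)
The plan is to obtain the corollary from Theorem~\ref{main} by a direct limiting argument; no new geometry is needed. Write $\beta(t)$ for the right-hand side of the asserted inequality, so that Theorem~\ref{main} reads $\rho(\mathscr{C}, R - 2/\sqrt 3, R) \le \beta(t)$ for every $R \ge 2/\sqrt 3$ (using $t \ge 2t_0$). The only gap between this and the corollary is that $\rho^+$ is defined through the diagonal quantities $\rho(\mathscr{C}, R, R)$, whereas the theorem controls $\rho(\mathscr{C}, R - 2/\sqrt 3, R)$; the two differ only by a ratio of ball volumes that converges to $1$.

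Concretely, first I would fix $S \ge 0$ and apply Theorem~\ref{main} with $R = S + 2/\sqrt 3$, which is legitimate since $S + 2/\sqrt 3 \ge 2/\sqrt 3$; this gives $\rho(\mathscr{C}, S, S + 2/\sqrt 3) \le \beta(t)$. Next, straight from the definition of density, the numerator $\sum_{C_i \subseteq B(S)} \Vol(C_i)$ is common to $\rho(\mathscr{C}, S, S)$ and $\rho(\mathscr{C}, S, S + 2/\sqrt 3)$, so
$$\rho(\mathscr{C}, S, S) = \rho(\mathscr{C}, S, S + 2/\sqrt 3)\cdot\frac{\Vol(B(S + 2/\sqrt 3))}{\Vol(B(S))} = \rho(\mathscr{C}, S, S + 2/\sqrt 3)\left(1 + \frac{2}{\sqrt 3\, S}\right)^{3}.$$
Combining the two displays, $\rho(\mathscr{C}, S, S) \le \beta(t)\bigl(1 + 2/(\sqrt 3\, S)\bigr)^{3}$ for all $S > 0$.

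Finally I would take $\limsup_{S \to \infty}$ of both sides: the left-hand side is $\rho^+(\mathscr{C})$ by definition, while $\bigl(1 + 2/(\sqrt 3\, S)\bigr)^{3} \to 1$, so the right-hand side tends to $\beta(t)$, yielding $\rho^+(\mathscr{C}) \le \beta(t)$ as claimed. There is essentially no obstacle here — all of the substance lives in Theorem~\ref{main} — but I would be careful on two small points: that the substitution $R = S + 2/\sqrt 3$ keeps $R$ in the range $[2/\sqrt 3, \infty)$ required by the theorem, and that the volume-ratio factor exceeds $1$, so passing to the limit preserves the inequality rather than reversing it.
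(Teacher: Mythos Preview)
Your proof is correct and follows essentially the same route as the paper: both deduce the corollary from Theorem~\ref{main} by a direct limiting argument showing that the fixed radius shift of $2/\sqrt 3$ is negligible as $R\to\infty$. The only cosmetic difference is that the paper handles this additively (splitting off a shell term $\sum_{V_R\smallsetminus W_R}\Vol(C_i)/\Vol(B(R))$ that tends to $0$), whereas you handle it multiplicatively via the volume-ratio factor $(1+2/(\sqrt 3\,S))^3\to 1$.
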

\begin{proof}

Let $V_R$ and $W_R$ be subsets of the index set $I$, with  $V_R = \{i : C_i \subseteq B(R)\}$ and $W_R= \{i : C_i \subseteq B(R-2/\sqrt{3})\}.$   By definition,

$$\rho^+(\mathscr{C}) = \limsup_{R\rightarrow \infty} \left( \sum_{W_R} \frac{\Vol(C_i)}{\Vol(B(R))} \hspace{2pt} + \sum_{V_R\smallsetminus W_R} \hspace{-2pt}\frac{\Vol(C_i)}{\Vol(B(R))} \right) .$$
 
As $R$ grows, the term $\sum_{V_R\smallsetminus W_R} \Vol(C_i)/\Vol(B(R))$ tends to $0$.  Further analysis of the right-hand side yields
$$\rho^+(\mathscr{C}) = \limsup_{R\rightarrow \infty}\rho(\mathscr{C},R-2/\sqrt{3}, R).$$ By Theorem \ref{main}, the stated inequality holds. \end{proof}
\begin{Lemma}\label{nestdensity}
Given a packing of $t$-cylinders with density $\rho$ where $t$ is at least 2, there is a packing of capped $(t-2)$-cylinders with packing density $(\frac{ t - \frac{2}{3} }{t})\cdot\rho.$
\end{Lemma}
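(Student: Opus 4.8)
The plan is to inscribe a capped $(t-2)$-cylinder inside each $t$-cylinder of the given packing, sharing a common axis line. Concretely, given a packing $\mathscr{C}=\{C_i\}_{i\in I}$ of $\mathbb{R}^3$ by $t$-cylinders with density $\rho$, let $a_i$ be the length-$t$ axis of $C_i$, let $a_i'\subseteq a_i$ be the concentric sub-segment of length $t-2$ (so $a_i$ overhangs $a_i'$ by exactly one unit past each endpoint), and let $C_i'$ be the capped $(t-2)$-cylinder with axis $a_i'$ — equivalently, the set of points at distance at most $1$ from $a_i'$. I would then show that $\mathscr{C}'=\{C_i'\}_{i\in I}$ is a packing by congruent capped $(t-2)$-cylinders of density $\tfrac{t-2/3}{t}\rho$.

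First I would verify the containment $C_i'\subseteq C_i$. For $x\in C_i'$ with nearest axis point $p\in a_i'$, let $q$ be the orthogonal projection of $x$ onto the axis line; then the distance from $x$ to the line is $|x-q|\le |x-p|\le 1$, and since orthogonal projection onto the line is $1$-Lipschitz and fixes $p$ we get $|q-p|\le|x-p|\le1$, which forces $q\in a_i$ because of the one-unit overhang. Hence $x\in C_i$. Disjointness of the interiors of the $C_i$ then passes to the $C_i'$, so $\mathscr{C}'$ is indeed a packing. The volume bookkeeping is immediate: $\Vol(C_i)=\pi t$ while $\Vol(C_i')=\pi(t-2)+\tfrac{4}{3}\pi=\pi\bigl(t-\tfrac{2}{3}\bigr)$, the two hemispherical caps assembling into a single unit ball, so each $C_i'$ carries the fixed fraction $\tfrac{t-2/3}{t}$ of the volume of its host $C_i$.

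Finally I would transfer the density. From $C_i'\subseteq C_i$ one gets $C_i\subseteq B(R)\Rightarrow C_i'\subseteq B(R)$; and from the observation that $C_i$ lies within distance $2$ of $a_i'\subseteq C_i'$ one gets $C_i'\subseteq B(R)\Rightarrow C_i\subseteq B(R+2)$. Summing volumes over these nested index sets and using the constant ratio $\tfrac{t-2/3}{t}$ yields
$$\frac{t-2/3}{t}\,\rho(\mathscr{C},R,R)\;\le\;\rho(\mathscr{C}',R,R)\;\le\;\frac{t-2/3}{t}\cdot\frac{\Vol(B(R+2))}{\Vol(B(R))}\,\rho(\mathscr{C},R+2,R+2).$$
Since $\Vol(B(R+2))/\Vol(B(R))\to1$ as $R\to\infty$, taking $\limsup_{R\to\infty}$ sandwiches $\rho^+(\mathscr{C}')$ at $\tfrac{t-2/3}{t}\rho$.

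I do not anticipate a genuine obstacle: the entire argument rests on the elementary fact that a $t$-cylinder has exactly one unit of overhang past each end of its shortened axis — precisely the room a hemispherical cap of unit radius needs. The only point demanding a little care is reconciling the construction with the exact definition of (upper) density used in the excerpt, which is why the last step is phrased with the radii $R$ and $R+2$ and a $\limsup$ rather than claiming an equality body by body.
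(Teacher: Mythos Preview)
Your proposal is correct and follows exactly the nesting construction the paper uses: inscribe a capped $(t-2)$-cylinder concentrically in each $t$-cylinder and compare volumes. You have simply filled in the details the paper leaves to a picture and a one-line volume comparison --- the verification of containment, the ratio $\Vol(C_i')/\Vol(C_i)=(t-\tfrac{2}{3})/t$, and the transfer of density through the paper's $\limsup$ definition via the $R$-versus-$(R+2)$ sandwich.
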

\begin{proof}
 \begin{figure}[h!]
  \centering
      \includegraphics[width=0.50\textwidth]{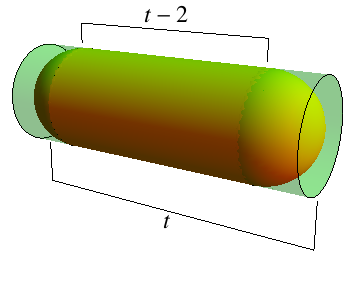}
  \caption{Nesting capped $(t-2)$-cylinders in $t$-cylinders.}
  \label{fig3}
\end{figure} 

From the given packing of $t$-cylinders, construct a packing by capped $(t-2)$-cylinders by nesting as illustrated in Figure \ref{fig3}. By comparing volumes, this packing of capped $(t-2)$-cylinders has the required density.
\end{proof}

\begin{corollary}
Fix $t \ge 2t_0+2$.  The upper density of a packing $\mathscr{Z}$ of $\hspace{2pt} \mathbb{R}^3$ by $t$-cylinders satisfies the inequality

$$ \rho^+(\mathscr{Z}) \le \frac{t}{\frac{\sqrt{12}}{\pi}(t-2-2t_0) + (2t_0)+\frac{4}{3}}.$$
\end{corollary}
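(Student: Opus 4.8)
The plan is to obtain this corollary as a short consequence of Lemma~\ref{nestdensity} together with the preceding corollary bounding $\rho^+$ for packings by capped cylinders, applied with its length parameter shifted down by $2$. Note first that the hypothesis $t \ge 2t_0 + 2$ is exactly what makes $t - 2 \ge 2t_0$, so that the capped-cylinder corollary is in force for length $t-2$; this is the only place the numerical constraint is used.

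First I would record the density-monotonicity behind the nesting of Lemma~\ref{nestdensity}, at the level of the balls $B(R)$ that define $\rho^+$. Starting from the packing $\mathscr{Z}$ by $t$-cylinders, let $\hat C_i \subseteq C_i$ denote the capped $(t-2)$-cylinder nested in the $t$-cylinder $C_i$, and set $\hat{\mathscr{C}} = \{\hat C_i\}$. Since $\Vol(C_i) = \pi t$ while $\Vol(\hat C_i) = \pi(t-2) + \tfrac{4}{3}\pi = \pi\bigl(t - \tfrac{2}{3}\bigr)$, and since $C_i \subseteq B(R)$ forces $\hat C_i \subseteq B(R)$, for every $R$ the set $\{i : C_i \subseteq B(R)\}$ is contained in $\{i : \hat C_i \subseteq B(R)\}$, so
$$\rho(\hat{\mathscr{C}}, R, R) \;\ge\; \sum_{C_i \subseteq B(R)} \frac{\Vol(\hat C_i)}{\Vol(B(R))} \;=\; \frac{t - \tfrac{2}{3}}{t}\,\rho(\mathscr{Z}, R, R).$$
Taking $\limsup_{R\to\infty}$ gives $\rho^+(\hat{\mathscr{C}}) \ge \tfrac{t - 2/3}{t}\,\rho^+(\mathscr{Z})$.

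Next I would apply the corollary bounding the upper density of a packing by capped cylinders, with its length parameter equal to $t-2 \ge 2t_0$, to bound $\rho^+(\hat{\mathscr{C}})$ above by
$$\frac{(t-2) + \tfrac{4}{3}}{\frac{\sqrt{12}}{\pi}\bigl((t-2) - 2t_0\bigr) + (2t_0) + \tfrac{4}{3}} \;=\; \frac{t - \tfrac{2}{3}}{\frac{\sqrt{12}}{\pi}(t - 2 - 2t_0) + (2t_0) + \tfrac{4}{3}},$$
using $(t-2) + \tfrac{4}{3} = t - \tfrac{2}{3}$. Combining this with the previous paragraph, the common factor $t - \tfrac23$ appears on both sides, and multiplying through by $t/\bigl(t - \tfrac23\bigr) > 0$ yields the claimed bound on $\rho^+(\mathscr{Z})$.

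I do not expect a genuine obstacle: the argument is a two-step chase once Lemma~\ref{nestdensity} and the capped-cylinder corollary are available. The only points that need care are (i) keeping the inequality pointed the right way — the nested packing $\hat{\mathscr{C}}$ is \emph{at least} as dense as the rescaled original inside each $B(R)$, since additional capped cylinders may fit, and this is the direction needed to bound $\rho^+(\mathscr{Z})$ from above — and (ii) the bookkeeping identity $(t-2)+\tfrac43 = t - \tfrac23$ that makes the cancellation exact. Both are routine, so the write-up will be brief.
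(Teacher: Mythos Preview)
Your proposal is correct and follows essentially the same approach as the paper: nest capped $(t-2)$-cylinders inside the $t$-cylinders via Lemma~\ref{nestdensity}, then invoke the upper-density bound for capped cylinders at length $t-2$. The paper phrases it as a contradiction while you run the inequality chain directly, and you are slightly more explicit about why the limsup step respects the inequality, but the substance is identical.
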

\begin{proof} Assume there exists a packing by  $t$-cylinders exceeding the stated bound.
Then Lemma \ref{nestdensity} gives a packing of capped $(t-2)$-cylinders with density greater than 

$$\left(\frac{ t - \frac{2}{3} }{t}\right)\cdot \left(\frac{t}{\frac{\sqrt{12}}{\pi}(t-2-2t_0) + (2t_0)+\frac{4}{3}}\right) =  \left(\frac{t-2+\frac{4}{3}}{\frac{\sqrt{12}}{\pi}(t-2-2t_0) + (2t_0)+\frac{4}{3}}\right). $$

This contradicts the density bound of Theorem \ref{main} for capped $(t-2)$-cylinders.
\end{proof}

 \section{Set Up}\label{setup} 
For the remainder of the paper, fix the notation $\mathscr{C}^*$ to be the restriction of $\mathscr{C}$ to $B(R-2/\sqrt{3})$, indexed by $I^*$.  To bound the density $\rho(\mathscr{C}^*,R-2/\sqrt{3},R)$ for a fixed packing $\mathscr{C}$ and a fixed $R \ge 2/\sqrt{3}$, decompose $B(R)$ into regions $D_i$ with disjoint interiors such that $C_i \subseteq D_i$ for all $i$ in $I^*$.  For such a packing $\mathscr{C}^*$ with fixed $R$, define the \emph{Dirichlet cell} $D_i$ of a capped $t$-cylinder $C_i$ to be the set of points in $B(R)$ no further from the axis $a_i$ of $C_i$ than from any other axis $a_j$ of $C_j$.   

For any point $x$ on axis $a_i$, define a plane $P_x$ normal to $a_i$ and containing $x$.  Define the \emph{Dirichlet slice} $d_x$ be the set $D_i \cap P_x.$   For a fixed Dirichlet slice $d_x$, define $S_x(r)$ to be the circle of radius $r$ centered at $x$ in the plane $P_x$.  Important circles are $S_x(1)$, which coincides with the cross section of the boundary of the cylinder, and $S_x(2/\sqrt{3})$, which circumscribes the regular hexagon in which $S_x(1)$ is inscribed. An $end$ of the capped $t$-cylinder $C_i$ refers to an endpoint of the axis $a_i.$  
 
 \begin{figure}[h!]
  \centering
      \includegraphics[width=0.50\textwidth]{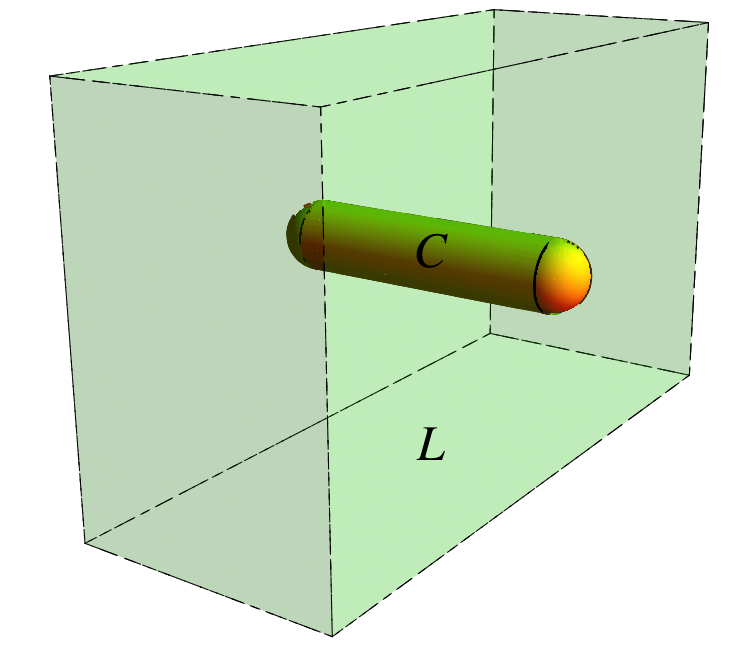}
  \caption{A capped cylinder $C$ and the slab $L$.}
  \label{slab}
\end{figure} 

 Define the \emph{slab} $L_i$ to be the closed region of $\mathbb{R}^3$ bounded by the normal planes to $a_i$ through the endpoints of $a_i$ and containing $C_i^0$ (Figure \ref{slab}). The Dirichlet cell $D_i$ decomposes into the region $D_i^0 = D_i \cap L_i$ containing $C_i^0$ and complementary regions $D_i^1$ and $D_i^2$ containing the caps $C_i^1$ and $C_i^2$ respectively (Figure \ref{Dirdecomp}).  
 
 \begin{figure}[h!]
  \centering
      \includegraphics[width=0.50\textwidth]{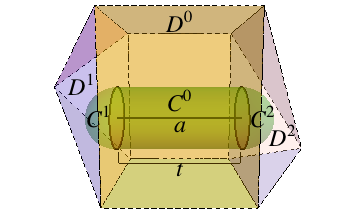}
  \caption{Decomposing a Dirichlet cell.}
  \label{Dirdecomp}
\end{figure} 

Aside from a few degenerate cases, the set of points equidistant from a point $x$ and line segment $a$ in the affine hull of $x$ and $a$ form a $parabolic$ $spline$ (Figure \ref{spline}).  A parabolic spline is a parabolic arc extending in a $C^1$ fashion to rays at the points equidistant to both the point $x$ and an endpoint of the line segment $a$.  Call the points where the parabolic arc meets the rays the Type~\rom{1} points of the curve.  A $parabolic$ $spline$ $cylinder$ is a surface that is the cylinder over a parabolic spline.

   \begin{figure}[h!]
  \centering
      \includegraphics[width=0.50\textwidth]{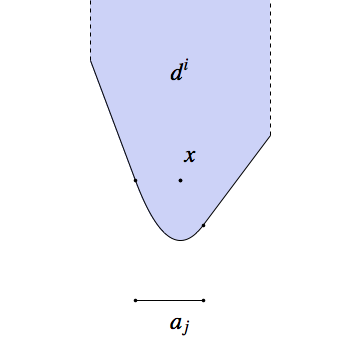}
  \caption{Parabolic spline associated with point $x$ and segment $a_j$.}
  \label{spline}
\end{figure} 

\section{Qualified Points}\label{part1}
\begin{definition}Fix a packing $\mathscr{C}$ of $\mathbb{R}^3$ by capped $t$-cylinders. Fix $R\ge 2/\sqrt{3}$ and restrict to $\mathscr{C}^*$.  A point $x$ on an axis is \emph{qualified} if the Dirichlet slice $d_x$ has area greater than $\sqrt{12}$, the area of the regular hexagon in which $S_x(1)$ is inscribed.
\end{definition}

\begin{proposition}\label{qual}
Fix a packing $\mathscr{C}$ of $\mathbb{R}^3$ by capped $t$-cylinders.  Fix $R\ge 2/\sqrt{3}$ and restrict to $\mathscr{C}^*$.  Let $x$ be a point on an axis $a_i$, where $i$ is a fixed element of $I^*\hspace{-3pt}.$ If $\hspace{2pt} B_x(4/\sqrt{3})$ contains no ends of $\mathcal{C}^*$, then $x$ is qualified.
\end{proposition}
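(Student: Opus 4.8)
The plan is to reduce the proposition, using the hypothesis on $B_x(4/\sqrt 3)$, to the cross-sectional estimate of Bezdek and Kuperberg \cite{bezdek1990maximum}: in a packing of $\mathbb{R}^3$ by bi-infinite unit cylinders, the intersection of a Dirichlet cell with a plane normal to its axis has area at least $\sqrt{12}$. Write $\ell_j$ for the line carrying the axis $a_j$, and let $H_x$ denote the regular hexagon inscribed in $S_x(2/\sqrt 3)$, so that $S_x(1)$ is its incircle, $H_x\subseteq B_x(2/\sqrt 3)$, and $\Area(H_x)=\sqrt{12}$. I would begin from three elementary observations: (i) since each capped $t$-cylinder is the set of points within distance $1$ of its axis and the $C_j$ have disjoint interiors, $\operatorname{dist}(a_j,a_k)\ge 2$ for all $j\ne k$, and in particular $\operatorname{dist}(x,a_j)\ge 2$ for every $j\in I^{*}\smallsetminus\{i\}$; (ii) since $x$ lies on an axis of $\mathscr C^{*}$, it lies in $B(R-2/\sqrt 3)$, so $B_x(2/\sqrt 3)\subseteq B(R)$; (iii) since $P_x$ is normal to $a_i$ at $x\in a_i$, the point of $a_i$ nearest any $y\in P_x$ is $x$, so $\operatorname{dist}(y,a_i)=|y-x|$.

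The core step is a radius computation turning the hypothesis into a local match with the bi-infinite picture. Fix $y\in P_x$ with $|y-x|\le 2/\sqrt 3$, and suppose $j\in I^{*}\smallsetminus\{i\}$ satisfies $\operatorname{dist}(y,a_j)\le 2/\sqrt 3$. If $q$ is the point of $a_j$ nearest $y$, then $|x-q|\le|x-y|+|y-q|\le 4/\sqrt 3$, so $q$ is not an end of $\mathscr C^{*}$ and lies in the relative interior of $a_j$; hence $q$ is the foot of the perpendicular from $y$ to $\ell_j$ and $\operatorname{dist}(y,a_j)=\operatorname{dist}(y,\ell_j)$. Moreover $\operatorname{dist}(x,\ell_j)\le|x-q|\le 4/\sqrt 3$, and since the endpoints of $a_j$ lie outside $B_x(4/\sqrt 3)$ the segment $a_j$ agrees with $\ell_j$ throughout that ball; so the foot $q_j$ of the perpendicular from $x$ to $\ell_j$ lies on $a_j$, whence $\operatorname{dist}(x,\ell_j)=\operatorname{dist}(x,a_j)\ge 2$, and for two such indices $j,k$ also $|q_j-q_k|\ge\operatorname{dist}(a_j,a_k)\ge 2$.

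Next I would set $J=\{\,j\in I^{*}\smallsetminus\{i\}:\operatorname{dist}(x,a_j)\le 4/\sqrt 3\,\}$ and $W=\{\,y\in P_x:\ |y-x|\le\operatorname{dist}(y,\ell_j)\text{ for all }j\in J\,\}$, and check that $W\cap B_x(2/\sqrt 3)\subseteq d_x$: given $y$ in the left-hand set and $j\in I^{*}\smallsetminus\{i\}$, if $j\notin J$ then $\operatorname{dist}(y,a_j)\ge\operatorname{dist}(x,a_j)-|y-x|>2/\sqrt 3\ge|y-x|=\operatorname{dist}(y,a_i)$, while if $j\in J$ then either $\operatorname{dist}(y,a_j)>2/\sqrt 3\ge|y-x|$ or, by the previous paragraph, $\operatorname{dist}(y,a_j)=\operatorname{dist}(y,\ell_j)\ge|y-x|$; combining this with $B_x(2/\sqrt 3)\subseteq B(R)$ gives $y\in d_x$. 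Hence $\Area(d_x)\ge\Area(W\cap B_x(2/\sqrt 3))$. Since $W$ is precisely the $P_x$-slice of the Dirichlet cell of $\ell_i$ in the line configuration $\{\ell_i\}\cup\{\ell_j:j\in J\}$, a convex region bounded by parabolic and straight walls each passing no nearer than $1$ to $x$, I would then apply the estimate of Bezdek and Kuperberg (in the local form described below) to obtain $\Area(W\cap B_x(2/\sqrt 3))\ge\Area(H_x)=\sqrt{12}$, and hence that $x$ is qualified.

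I expect this last invocation to be the main obstacle. The Bezdek--Kuperberg bound is proved for genuine packings of bi-infinite cylinders, whereas the extended lines $\ell_j$ ($j\in J$) are only guaranteed to be pairwise $2$-separated along the sub-segments lying inside $B_x(4/\sqrt 3)$: the feet $q_j$ are mutually at distance at least $2$, but the full lines need not be. I would deal with this by keeping the comparison of the conic-walled slice with $H_x$ strictly local, using only the positions of the $q_j$ and the directions of the $\ell_j$ at $x$, which is all that a single normal slice of their Dirichlet-cell argument actually uses; this is also the point at which one would extract the strict inequality in the definition of ``qualified'' from the outward curvature of the parabolic-spline walls.
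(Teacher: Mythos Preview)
Your reduction is correct, and in fact your region $W\cap B_x(2/\sqrt 3)$ coincides with $d_x\cap B_x(2/\sqrt 3)$ (the paper's $d_x^*$): for $y\in P_x$ with $|y-x|\le 2/\sqrt 3$ and $j\in J$, the nearest point of $\ell_j$ to $y$ lies in $B_x(4/\sqrt 3)$ and hence on $a_j$, so $\operatorname{dist}(y,\ell_j)=\operatorname{dist}(y,a_j)$ throughout that disk. From this point on your approach and the paper's are the same argument in slightly different dress. Passing to the carrier lines $\ell_j$ does buy you one genuine simplification: the boundary of $W$ consists of honest parabolas rather than parabolic splines, so the Type~\rom{1} points disappear from the analysis.

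The gap is exactly where you locate it. The ``local form'' of the Bezdek--Kuperberg slice bound that you invoke is not available as a citation; it is the content of the paper's proof. The paper establishes it by (i) showing that no non-analytic boundary points of $d_x$ lie inside $S_x(2/\sqrt 3)$---Type~\rom{1} via the no-ends hypothesis, Type~\rom{2} via $x\in B(R-2/\sqrt 3)$, Type~\rom{3} via the three-tangent-balls lemma of \cite{kuperberg1991placing}; (ii) proving the angle bound $\alpha_0=2\arccos(\sqrt 3-1)$ for any single boundary arc meeting $S_x(2/\sqrt 3)$, using a separating hyperplane parallel to $a_i$ between $C_i$ and $C_j\cap L_i$; and (iii) carrying out the three-step truncation and rearrangement of \cite[\S6]{bezdek1990maximum}. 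Your assertion that the slice argument ``uses only the positions of the $q_j$ and the directions of the $\ell_j$'' understates what is needed: the angle bound requires the separating-hyperplane step, which in your line picture cannot come from $\operatorname{dist}(\ell_i,\ell_j)\ge 2$ (you correctly note this may fail) but must instead be extracted, as the paper does, from disjointness of the cylinders inside the slab $L_i$. So the outline is right, but the step you flag as the main obstacle is precisely the substance that has to be written out.
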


The proof of this proposition will modify the Main Lemma of \cite{bezdek1990maximum}.  A series of lemmas allow for the truncation and rearrangement of the Dirichlet slice.  The goal is to construct from $d_x$ a subset $d_x^{**}$ of $P_x$ with the following properties: 
\begin{itemize} 
\item $d_x^{**}$ contains $S_x(1)$.
\item The boundary of $d_x^{**}$ is composed of line segments and parabolic arcs with apexes touching $S_x(1).$
\item The non-analytic points of the boundary of $d_x^{**}$ lie on $S_x(2/\sqrt{3}).$
\item The area of $d_x^{**}$ is less than the area of $d_x.$
\end{itemize}
Then the computations of \cite[\S6]{bezdek1990maximum} apply.

\begin{Lemma}
If a point $x$ satisfies the conditions of Proposition \ref{qual}, then the Dirichlet slice $d_x$ is a bounded convex planar region, the boundary of which is a simple closed curve consisting of a finite union of parabolic arcs, line segments and circular arcs.  
\end{Lemma}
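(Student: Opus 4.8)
The plan is to realize $d_x$ as the intersection of a round disk with finitely many half-planes and parabolic regions of $P_x$, to deduce convexity from a half-space trick, and then to read off the boundary arcs from the distance functions to the competing axes. First I would work in the plane $P_x$, with $x$ at the origin. Since $x$ lies on $a_i$ and $P_x$ is normal to $a_i$ at $x$, the foot of the perpendicular from any $y\in P_x$ to the line through $a_i$ is $x$ itself, so $\mathrm{dist}(y,a_i)=|y-x|$ for every $y\in P_x$. By the definition of the Dirichlet cell, $D_i=\{z\in B(R):\mathrm{dist}(z,a_i)\le\mathrm{dist}(z,a_j)\ \text{for all}\ j\neq i\}$, and hence
$$d_x=\bigl(B(R)\cap P_x\bigr)\cap\bigcap_{j\neq i}\bigl\{\,y\in P_x:\ |y-x|\le\mathrm{dist}(y,a_j)\,\bigr\}.$$

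Next I would establish convexity, compactness, and nonempty interior. The crucial observation is that $|y-x|\le\mathrm{dist}(y,a_j)=\min_{z\in a_j}|y-z|$ holds if and only if $|y-x|\le|y-z|$ for every $z\in a_j$; hence each set $\{y\in P_x:|y-x|\le\mathrm{dist}(y,a_j)\}$ is an intersection of closed half-planes of $P_x$, in particular closed and convex. (It is this slicing that rescues convexity --- the cell $D_i$ itself need not be convex.) Intersecting with the closed disk $B(R)\cap P_x$ shows $d_x$ is closed, convex and bounded, hence compact. Moreover, because the packing has pairwise disjoint interiors one checks $C_i\subseteq D_i$, so $d_x=D_i\cap P_x$ contains $C_i\cap P_x$, which is precisely the closed unit disk about $x$ in $P_x$; thus $d_x$ has nonempty interior. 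A compact convex planar region with nonempty interior has boundary a simple closed curve, which takes care of those assertions.

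Then I would identify the boundary arcs. Only finitely many constraints survive: if $\mathrm{dist}(x,a_j)>4R$, then every $y\in B(R)\cap P_x$ (for which $|y-x|\le 2R$, as $|x|\le R$ too) satisfies $\mathrm{dist}(y,a_j)\ge\mathrm{dist}(x,a_j)-|y-x|>2R\ge|y-x|$, so the $j$-th constraint is vacuous on $B(R)\cap P_x$ and may be dropped; and since the $C_j$ are congruent bodies of finite diameter with pairwise disjoint interiors, only finitely many meet $B_x(4R)$, so only finitely many have $\mathrm{dist}(x,a_j)\le 4R$. For each surviving $j$, the boundary of $\{z\in\mathbb{R}^3:|z-x|\le\mathrm{dist}(z,a_j)\}$ is the equidistant locus of the point $x$ and the segment $a_j$, which by Section~\ref{setup} is a parabolic spline cylinder: a parabolic cylinder continued in a $C^1$ fashion along two of its rulings into two half-planes --- the perpendicular-bisector planes of $x$ with the two endpoints of $a_j$ --- or, in degenerate configurations, a union of half-planes. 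Its section by $P_x$ is therefore a finite union of parabolic arcs and line segments, with degeneracies only flattening a parabolic arc to a segment and a plane section of a parabolic cylinder never being a circular or elliptic arc. The only circular arcs of $\partial d_x$ arise from $\partial B(R)\cap P_x$. Since finitely many constraints remain, $\partial d_x$ is a finite union of parabolic arcs, line segments and circular arcs, which is the claim.

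I do not anticipate a genuine obstacle. The only real content is the elementary fact that a plane section of the equidistant surface of a point and a line is a parabola or a line (never another conic), the routine case split separating the middle of $a_j$ from its two endpoints, and the bookkeeping that discards all but finitely many competing cylinders --- all of which become easy once one observes that $d_x$ is trapped inside the disk $B(R)\cap P_x$, which forces boundedness and local finiteness for free.
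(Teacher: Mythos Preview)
Your proof is correct and follows essentially the same route as the paper: write $d_x$ as the intersection of $B(R)\cap P_x$ with the regions $\{y\in P_x:\mathrm{dist}(y,a_i)\le\mathrm{dist}(y,a_j)\}$, identify each bisecting boundary as the $P_x$-section of the parabolic spline cylinder equidistant from the point $x$ and the segment $a_j$, and read off convexity, boundedness, and the description of the boundary arcs. Two minor remarks: your half-space trick for convexity (rewriting $|y-x|\le\min_{z\in a_j}|y-z|$ as $\bigcap_{z\in a_j}\{|y-x|\le|y-z|\}$) is a clean alternative to the paper's appeal to the convex side of a parabolic spline, while your separate finiteness argument is unnecessary here since $I^*$ is already finite by construction (all $C_j$ lie in the bounded ball $B(R-2/\sqrt{3})$).
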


\begin{proof}
Without loss of generality, fix a point $x$ on $a_i$.  For each $j \ne i$ in $I^*\negthinspace,$ let $d^j$ be the set of points in $P_x$ no further from $a_i$ than from $a_j$.  The Dirichlet slice $d_x$ is the intersection of  $B(R)$ with $d^j$ for all $j \ne i$ in $I^*\negthinspace.$ The boundary of $d^j$ is the set of points in $P_x$ that are equidistant from $a_i$ and $a_j$.  As $P_x$ is perpendicular to $a_i$ at $x$, the boundary of $d^j$ is also the set of points in $P_x$ equidistant from $x$ and $a_j$.

This is the intersection of the plane $P_x$ with the set of points in $\mathbb{R}^3$ equidistant from $x$ and $a_j$.  The set of points  in $\mathbb{R}^3$ equidistant from $x$ and $a_j$ is a parabolic spline cylinder perpendicular to the affine hull of $x$ and $a_j$.  Therefore the set of points equidistant from $x$ and $a_j$ in $P_x$ is also a parabolic spline, with $x$ on the convex side.

In the degenerate cases where $x$ is in the affine hull of $a_j$ or $P_x$ is parallel to $a_j$, the set of points equidistant from $x$ and $a_j$ in $P_x$ are lines or is empty.

The region $d_x$ is clearly bounded as it is contained in $B(R).$
The point $x$ lies in the convex side of the parabolic spline so each region $d^j$ is convex.  The set $B(R)$ contains $x$ and is convex, so $d_x$ is convex.
This is a finite intersection of regions bounded by parabolic arcs, lines and a circle, so the rest of the lemma follows.
\end{proof}

To apply the results of \cite{bezdek1990maximum}, the non-analytic points of the boundary of the Dirichlet slice $d_x$ must be controlled.  From the construction of $d_x$ as a finite intersection, the non-analytic points of the boundary of $d_x$ fall into three non-disjoint classes of points: the Type~\rom{1} points of a parabolic spline that forms a boundary arc of $d_x$, Type~\rom{2} points defined to be points on the boundary of $d_x$ that are also on the boundary of $B(R)$, and Type~\rom{3} points, defined to be points on the boundary of $d_x$ that are equidistant from three or more axes. Type III points are the points on the boundary of $d_x$ where the parabolic spline boundaries of various $d^j$ intersect.

\begin{Lemma}\label{key1}
If a point $x$ satisfies the conditions of Proposition \ref{qual}, then no non-analytic points of the boundary of $d_x$ are in $\Int(\Conv(S_x(2/\sqrt{3})))$, where the interior is with respect to the subspace topology of $P_x$ and $\Conv(\cdot)$ is the convex hull.
\end{Lemma}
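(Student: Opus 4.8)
The plan is to dispose of the three classes of non-analytic points in turn: a Type~\rom{2} point cannot lie that close to $x$ for a trivial containment reason, while a Type~\rom{1} or Type~\rom{3} point lying in $\Int(\Conv(S_x(2/\sqrt3)))$ would either push an end of $\mathscr{C}^*$ into $B_x(4/\sqrt3)$ --- forbidden by the hypothesis of Proposition~\ref{qual} --- or violate the disjointness of the packing. Two preliminary observations are worth recording first. Since $B_x(4/\sqrt3)$ contains no ends and $x$ itself lies on $a_i$, the point $x$ is not an endpoint of $a_i$; consequently $x$ is the foot of the perpendicular from any $y\in P_x$ to $a_i$ and $\operatorname{dist}(y,a_i)=|y-x|$. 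Also $C_i\cap P_x=\Conv(S_x(1))$ is contained in $d_x$, so every boundary point $y$ of $d_x$ has $|y-x|\ge 1$. Writing $\rho_y=|y-x|$, I must derive a contradiction from the existence of a non-analytic boundary point $y$ of $d_x$ with $\rho_y<2/\sqrt3$.

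I would first handle the two easy types. If $y$ is a Type~\rom{2} point then $y\in\partial B(R)$; but $C_i\subseteq B(R-2/\sqrt3)$ gives $|x|\le R-2/\sqrt3$, so $|y|\le|x|+\rho_y<R$ and $y\notin\partial B(R)$, a contradiction. If $y$ is a Type~\rom{1} point of a spline forming a boundary arc of $d_x$, that spline is $\partial d^j$ for some $j\ne i$, and by the definition of a Type~\rom{1} point $y$ is equidistant from $x$ and an endpoint $e$ of $a_j$; hence $|y-e|=\rho_y$, so $|x-e|\le|x-y|+|y-e|=2\rho_y<4/\sqrt3$, placing the end $e$ of $C_j$ inside $B_x(4/\sqrt3)$ --- a contradiction.

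The Type~\rom{3} case is the crux. Here $y$ is equidistant from $a_i$ and from two further distinct axes $a_j,a_k$; let $z_j\in a_j$ and $z_k\in a_k$ be the points of these segments nearest to $y$, so $|y-z_j|=|y-z_k|=\rho_y$. If $z_j$ (or $z_k$) is an endpoint of its segment then it is an end of $C_j$ with $|x-z_j|\le 2\rho_y<4/\sqrt3$, and we are done as above. Otherwise $z_j,z_k$ are interior points, and then $x$, $z_j$, $z_k$ all lie on the sphere $\partial B_y(\rho_y)$. Since $C_i,C_j,C_k$ have pairwise disjoint interiors and each is the closed unit neighborhood of its axis, the three axes are pairwise at distance at least $2$ (else the midpoint of a connecting segment shorter than $2$ would lie in two interiors); in particular $|x-z_j|,|x-z_k|,|z_j-z_k|\ge 2$. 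This is exactly what cannot happen on so small a sphere: for any three points $p_1,p_2,p_3$ on a sphere of radius $\rho$ one has $\sum_{m<n}|p_m-p_n|^2=9\rho^2-|p_1+p_2+p_3|^2\le 9\rho^2$, so the smallest of the three pairwise distances is at most $\rho\sqrt3$. Thus $2\le\rho_y\sqrt3$, contradicting $\rho_y<2/\sqrt3$. I expect this last subcase to be the main obstacle: the end-avoidance hypothesis is useless there, and one must instead exploit the packing condition itself, leaning on the fact that $2/\sqrt3$ is the circumradius of an equilateral triangle of side $2$ --- the same coincidence that makes $\sqrt{12}$ the right threshold in the definition of a qualified point.
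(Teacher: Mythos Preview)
Your proof is correct and follows the same three-case analysis as the paper's. The only difference is in the Type~\rom{3} case: the paper cites an external lemma of Kuperberg (a ball meeting three non-overlapping unit balls has radius at least $2/\sqrt3-1$), whereas you prove the equivalent bound directly via the identity $\sum_{m<n}|p_m-p_n|^2=9\rho^2-|p_1+p_2+p_3|^2$ for points on a sphere of radius $\rho$; your endpoint/interior case split there is unnecessary (the sphere argument works regardless) but harmless.
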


\begin{proof}
It is enough to show there are no Type~\rom{1}, Type~\rom{2}, or Type~\rom{3} points in $\Int(\Conv(S_x(2/\sqrt{3}))).$
By hypothesis, $B_x(4/\sqrt{3})$ contains no ends.  The Type~\rom{1} points are equidistant from $x$ and an end.  As there are no ends contained in $B_x(4/\sqrt{3})$, there are no Type~\rom{1} points in $\Int(\Conv(S_x(2/\sqrt{3})))$.  

By hypothesis, $x$ is in $B(R-2/\sqrt{3})$. Therefore there are no points on the boundary of $B(R)$ in $\Int(\Conv(S_x(2/\sqrt{3})))$ and therefore no Type~\rom{2} points  in $\Int(\Conv(S_x(2/\sqrt{3})))$. 

 As a Type~\rom{3} point is equidistant from three or more axes, at some distance $\ell$, it is the center of a ball tangent to three unit balls.  This is because a capped $t$-cylinder contains a unit ball which meets the ball of radius $\ell$ centered at the Type~\rom{3} point.  These balls do not overlap as the interiors of the capped $t$-cylinders have empty intersection.  Lemma $3$ of \cite{kuperberg1991placing} states that if a ball of radius $\ell$ intersects three non-overlapping unit balls in $\mathbb{R}^3$, then  $\ell \ge 2/\sqrt{3} -1.$ It follows that there are no Type~\rom{3} points in $\Int(\Conv(S_x(2/\sqrt{3})))$.  
 \end{proof}

\begin{Lemma}
Fix a packing $\mathscr{C}$.  Then for for all $i\ne j$ and $i,j \in I^*\negthinspace,$ there is a supporting hyperplane $Q$ of $\Int(C_i)$ that is parallel to $a_i$ and separating $\Int(C_i \cap L_i)$ from $\Int(C_j \cap L_i).$
\end{Lemma}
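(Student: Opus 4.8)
The plan is to reduce this three-dimensional separation statement to a two-dimensional one by projecting along the axis $a_i$. Fix $i \ne j$ in $I^*$, and let $\pi\colon \mathbb{R}^3 \to P$ be the orthogonal projection onto a plane $P$ normal to $a_i$, so that $\pi$ collapses the whole line containing $a_i$ to a single point $x$; let $D$ be the closed unit disk centered at $x$ in $P$. The relevant observations are: (i) $C_i \cap L_i$ is exactly the solid cylinder $C_i^0 = \pi^{-1}(D)\cap L_i$, so $\Int(C_i \cap L_i) = \pi^{-1}(\Int(D))\cap \Int(L_i)$ and $\pi(\Int(C_i\cap L_i)) = \Int(D)$; and (ii) for any line $\ell \subseteq P$, the plane $Q := \pi^{-1}(\ell)$ is parallel to $a_i$, and $Q$ is a supporting hyperplane of $\Int(C_i)$ exactly when $\ell$ is tangent to the unit circle $S_x(1) = \partial D$ (here I would use $\Int(C_i) \subseteq \pi^{-1}(\Int(D))$, which holds because the projection of a point within distance $<1$ of the segment $a_i$ lands in $\Int(D)$). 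So the whole problem becomes: produce a line in $P$, tangent to $S_x(1)$, that separates $\Int(D)$ from $\pi(\Int(C_j\cap L_i))$.

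The crux is the claim that $\pi(\Int(C_j\cap L_i)) \cap \Int(D) = \emptyset$. Suppose not, and pick $y \in \Int(C_j\cap L_i)$ with $\pi(y)\in\Int(D)$. Some ball $B_y(\varepsilon)$ lies in $C_j\cap L_i$, which forces $y\in\Int(L_i)$ and $y\in\Int(C_j)$; together with $y\in\pi^{-1}(\Int(D))$, an open set, this places $y$ in the open set $\pi^{-1}(\Int(D))\cap\Int(L_i)\subseteq C_i^0$, hence $y\in\Int(C_i^0)\subseteq\Int(C_i)$. Then $y\in\Int(C_i)\cap\Int(C_j)$, contradicting the packing property. (If $\Int(C_j\cap L_i)=\emptyset$ there is nothing to prove: any tangent plane to $C_i^0$ is parallel to $a_i$, supports $C_i$, and vacuously separates.)

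With the claim established, $\Int(D)$ and $B:=\pi(\Int(C_j\cap L_i))$ are disjoint convex subsets of $P$ --- the latter is a linear image of a convex set --- with $\Int(D)$ open, so the separating hyperplane theorem in $P$ yields a line $\ell_0\subseteq P$ with $\Int(D)$ in one open half-plane and $B$ in the complementary closed half-plane. Since $\ell_0$ misses $\Int(D)$ we have $\operatorname{dist}(x,\ell_0)\ge 1$, and translating $\ell_0$ toward $x$ until the distance equals $1$ only moves it farther from $B$; so I may assume $\ell_0$ is tangent to $S_x(1)$ while still separating $\Int(D)$ from $B$. Taking $Q=\pi^{-1}(\ell_0)$ and pulling the half-plane inclusions back through $\pi$ gives a plane parallel to $a_i$ that supports $C_i$ (by observation (ii)) and has $\Int(C_i\cap L_i)$ on one side and $\Int(C_j\cap L_i)$ on the other, as required.

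The one genuine obstacle is the disjointness claim $\pi(\Int(C_j\cap L_i))\cap\Int(D)=\emptyset$; projection does not preserve disjointness in general, and the argument uses in an essential way that $C_i\cap L_i$ is the \emph{full} cylinder $\pi^{-1}(D)\cap L_i$ over the slab --- which is precisely why one intersects with $L_i$ --- together with the fact that the interior of a subset of $L_i$, taken in $L_i$ or in $\mathbb{R}^3$, lands inside $\Int(L_i)$. Everything else is the planar separation theorem and routine bookkeeping with interiors and preimages.
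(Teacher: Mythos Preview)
Your proof is correct and follows essentially the same strategy as the paper: both exploit that $C_i\cap L_i$ is a full cylinder over the slab to force the separating hyperplane to be parallel to $a_i$, then translate to a supporting position. The only cosmetic difference is that the paper extends $C_i\cap L_i$ to an infinite cylinder $\bar C_i$ and separates in $\mathbb{R}^3$ (arguing afterward that the hyperplane must be parallel to $a_i$), whereas you project along $a_i$ and separate in the plane; your disjointness claim $\pi(\Int(C_j\cap L_i))\cap\Int(D)=\emptyset$ is exactly the paper's implicit assertion that $\bar C_i$ and $C_j\cap L_i$ have disjoint interiors, rewritten through the projection.
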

\begin{proof}

We can extend $C_i\cap L_i$ to an infinite cylinder $\bar C_i$ where $C_j \cap L_i$ and $\bar C_i$ have disjoint interiors. The sets $C_j \cap L_i$ and $\bar C_i$ are convex, so the Minkowski hyperplane separation theorem gives the existence of a hyperplane separating $\Int(C_j \cap L_i)$ and $\Int(\bar C_i)$.  This hyperplane is parallel to the axis $a_i$ by construction.  We may take $Q$ to be the parallel translation to a supporting hyperplane of $\Int(C_i)$ that still separates  $\Int(C_i \cap L_i)$ from $\Int(C_j \cap L_i).$  See Figure \ref{seper} for an example.
\end{proof}

\begin{figure}[h!]
  \centering
      \includegraphics[width=0.50\textwidth]{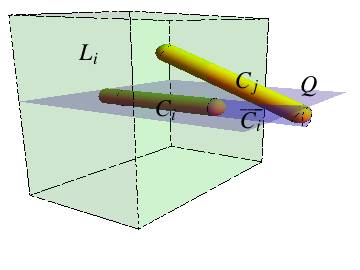}
       \caption{The hyperplane $Q$ separates $\Int(C_i \cap L_i)$ from $\Int(C_j \cap L_i).$}
   \label{seper}
\end{figure} 

\begin{Lemma} \label{key2}
Fix a packing $\mathscr{C}.$ Fix a point $x$ on the axis $a_i$ of $C_i$ such that $B_x(4/\sqrt{3})$ contains no ends.
Let $y$ and $z$ be points on the circle $S_x(2/\sqrt{3})$.  If each of $y$ and $z$ is equidistant from $C_i$ and $C_j$, then the angle $yxz$ is smaller than or equal to $2 \arccos (\sqrt{3}-1)  := \alpha_0,$ which is approximately $85.88^\circ.$
\end{Lemma}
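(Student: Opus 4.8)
The plan is to reduce the statement to a two‑dimensional fact inside the plane $P_x$ and then settle it by a short chord computation on the circle $S_x(2/\sqrt{3})$; the bridge between the three‑dimensional packing and $P_x$ is the separating hyperplane from the previous lemma. So first I would fix coordinates adapted to that hyperplane. The previous lemma produces a supporting hyperplane $Q$ of $\Int(C_i)$ that is parallel to $a_i$ and has $\Int(C_i\cap L_i)$ and $\Int(C_j\cap L_i)$ on opposite sides; since $Q$ supports $C_i$ and is parallel to its axis, $Q$ lies at distance exactly $1$ from the line containing $a_i$. Choose coordinates $(\xi,\eta,\zeta)$ with $Q=\{\xi=0\}$, with the $\zeta$-axis parallel to $a_i$, and with the sign chosen so that $a_i\subseteq\{\xi=1\}$ and $C_j\cap L_i\subseteq\{\xi\le 0\}$. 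Then $x$ has $\xi$-coordinate $1$, and $P_x=\{\zeta=\zeta_x\}$, where $\zeta_x$ is the $\zeta$-coordinate of $x$.

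The heart of the argument is to show that $y$, and by the identical argument $z$, satisfies $\xi_y\le 2/\sqrt{3}-1$. Since $P_x$ is perpendicular to $a_i$ at the axis point $x$ and $y\in S_x(2/\sqrt{3})$, the nearest point of $C_i$ to $y$ lies on the segment $xy$ at distance $1$ from $x$, so $d(y,C_i)=2/\sqrt{3}-1$; by hypothesis $d(y,C_j)=2/\sqrt{3}-1$ as well. Let $y'$ be the point of the segment $a_j$ nearest to $y$, so $d(y,y')=2/\sqrt{3}$, and let $c_y$ be the point of the segment $yy'$ at distance $1$ from $y'$; then $c_y$ is a nearest point of $C_j$ to $y$ and $d(y,c_y)=2/\sqrt{3}-1$. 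Because $B_x(4/\sqrt{3})$ contains no ends, $x$ is at distance greater than $4/\sqrt{3}$ from each endpoint of $a_i$, so the slab $L_i$ contains every point whose $\zeta$-coordinate differs from $\zeta_x$ by at most $4/\sqrt{3}$. As $|y'-x|\le|y'-y|+|y-x|=4/\sqrt{3}$, both $y'$ and the whole segment $yy'$ --- in particular $c_y$ --- lie in $L_i$, so $c_y\in C_j\cap L_i\subseteq\{\xi\le 0\}$. Together with $d(y,c_y)=2/\sqrt{3}-1$ this forces $\xi_y\le\xi_{c_y}+(2/\sqrt{3}-1)\le 2/\sqrt{3}-1$.

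The conclusion is then a one‑line computation. Inside $P_x$ the points $y$ and $z$ lie on the circle $S_x(2/\sqrt{3})$ of radius $2/\sqrt{3}$ about $x$ and in the half‑plane $\{\xi\le 2/\sqrt{3}-1\}$. The chord of that circle along $\{\xi=2/\sqrt{3}-1\}$ lies at distance $1-(2/\sqrt{3}-1)=2-2/\sqrt{3}$ from the center $x$, so the arc it cuts off on the side away from $x$ --- the arc containing $y$ and $z$ --- subtends at $x$ the central angle $2\arccos\!\big((2-2/\sqrt{3})/(2/\sqrt{3})\big)=2\arccos(\sqrt{3}-1)=\alpha_0$. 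Since $y$ and $z$ both lie on this arc, $\angle yxz\le\alpha_0$.

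I expect the one genuinely delicate point to be the slab containment of $c_y$. The separating‑hyperplane lemma only controls the part of $C_j$ lying in $L_i$, so in order to apply it against the nearest point of $C_j$ to $y$ one must know that this nearest point has not slipped out of $L_i$ near an end of $C_i$; this is exactly the purpose of the hypothesis that $B_x(4/\sqrt{3})$ contains no ends, and the radius $4/\sqrt{3}=2/\sqrt{3}+2/\sqrt{3}$ is chosen precisely so that the ball reaches the foot $y'$ of the perpendicular from $y$ to $a_j$. After that reduction everything is elementary plane geometry, and the value $\alpha_0=2\arccos(\sqrt{3}-1)$ drops out of the single chord computation above.
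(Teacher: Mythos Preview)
Your proof is correct and follows essentially the same route as the paper's: use the separating hyperplane $Q$ from the previous lemma, argue via the ``no ends in $B_x(4/\sqrt{3})$'' hypothesis that the nearest point of $C_j$ to $y$ (and to $z$) lies in the slab $L_i$ and hence on the far side of $Q$, conclude that $y$ and $z$ lie within $2/\sqrt{3}-1$ of the line $k=Q\cap P_x$, and finish with the chord computation giving $\alpha_0=2\arccos(\sqrt{3}-1)$. Your version is more explicit (coordinates, the point $c_y$, the triangle inequality $|y'-x|\le 4/\sqrt{3}$), but the ideas coincide with the paper's proof.
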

\begin{proof}
 By hypothesis, $B_x(4/\sqrt{3})$ contains no ends, including the end of the axis $a_i$. Therefore any points of $C_j$ that are not in $L_i$ are at a distance greater than $4/\sqrt{3}$ from $x$.  The points of $C_i$ and $C_j$ that $y$ and $z$ are equidistant from must be in the slab $L_i$, so it is enough to consider $y$ and $z$ equidistant from $C_i$ and $C_j \cap L_i.$ 
 
 By construction, the hyperplane $Q$ separates all points of $C_j \cap L_i$ from $x$. Let $k$ be the line of intersection between $P_x$ and $Q$.  As $y$ and $z$ are at a distance of $2/\sqrt{3} -1$ from both  $C_j\cap L_i$ and $C_i$, they are at most that distance from $Q$.  They are also at most that distance from $k$. The largest possible angle $yxz$ occurs when $y$ and $z$ are on the $x$ side of $k$ in $P_x$, each at exactly the distance $2/\sqrt{3} -1$ from $k$ as illustrated in Figure \ref{fig7} .  This angle is exactly $2 \arccos (\sqrt{3}-1) := \alpha_0.$
\end{proof}

\begin{figure}[h!]
  \centering
      \includegraphics[width=0.50\textwidth]{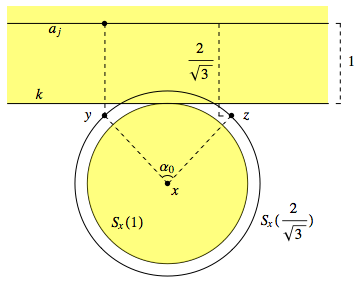}
  \caption{An extremal configuration for the angle $\alpha_0.$}
 \label{fig7}
\end{figure} 

The following lemma is proved in \cite{bezdek1990maximum}.

\begin{Lemma}\label{key3}
Let $y$ and $z$ be points on $S_x(2/\sqrt{3})$ such that $60^\circ < yxz < \alpha_0.$  For every parabola $p$ passing through $y$ and $z$ and having $S_x(1)$ on its convex side, let $xypzx$ denote the region bounded by segments $xy$, $xz$, and the parabola $p$. Let $p_0$ denote the parabola passing through $y$ and $z$ and tangent to $S_x(1)$ at its apex.
$$\Area(xyp_0zx) \le \Area(xypzx).$$
\end{Lemma}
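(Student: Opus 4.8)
The statement compares two areas of curvilinear triangles with vertex $x$ and base points $y,z$ on $S_x(2/\sqrt{3})$, one bounded by the parabola $p_0$ tangent to $S_x(1)$ at its apex, the other by an arbitrary parabola $p$ through $y,z$ having $S_x(1)$ on its convex side. Since both regions share the two straight sides $xy$ and $xz$, the difference in area is exactly the signed area of the lune between the parabolas $p_0$ and $p$ inside the angular sector $\angle yxz$. So the whole problem reduces to showing that among all parabolas through the two fixed points $y,z$ with the unit circle on the convex side, the one tangent to $S_x(1)$ encloses (together with the chord $yz$, or equivalently within the sector) the least area. I would first set up coordinates adapted to the chord: place the $x$-axis along the perpendicular bisector of $yz$, so that $y=(h,c)$ and $z=(h,-c)$ for suitable $h,c>0$ determined by the angle $yxz$ and the radius $2/\sqrt{3}$, and write a general parabola with axis of symmetry the $x$-axis as $u = a - b\,v^2$ in coordinates $(u,v)$ — but one must be careful, because a general parabola through $y$ and $z$ need not be symmetric about the bisector.

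**Reducing to the symmetric case.** The cleaner route, and the one I expect the original source to take, is: (i) show that it suffices to consider parabolas symmetric about the perpendicular bisector of $yz$; (ii) among those, parametrize by a single parameter (say the apex location, or the "width" $b$), subject to the constraint that $S_x(1)$ lies on the convex side, i.e. the parabola does not cross the unit circle; (iii) observe that the area of $xypzx$, as a function of that parameter, is monotone — pushing the parabola toward the unit circle strictly decreases the enclosed area — so the minimum is attained at the extreme admissible position, which by convexity of the disk is precisely tangency. Step (i) can be handled by a symmetrization/reflection argument: given a non-symmetric parabola through $y,z$ with the disk on its convex side, its mirror image across the bisector is another such parabola, and some averaging or a direct comparison of the two with the symmetric tangent parabola $p_0$ (which is fixed by the reflection) pins down $p_0$ as extremal. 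Alternatively one avoids (i) entirely by a direct convexity argument: the region $xypzx$ always contains the sector-minus-(disk side), and $xyp_0zx$ is the intersection over all admissible $p$ — but making that precise still needs the tangency.

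**Executing the monotonicity.** Concretely, with the parabola written as $u = a - b v^2$ (axis = bisector of $yz$), the two passage conditions through $y=(h,c)$ and $z=(h,-c)$ collapse to the single equation $h = a - bc^2$, leaving a one-parameter family $u = h + b(c^2 - v^2)$, $b \ge 0$. The area of $xyp_0zx$ can be computed as the area of the triangle $xyz$ plus $\int_{-c}^{c} b(c^2 - v^2)\,dv = \tfrac{4}{3}bc^3$ (plus a fixed term), so $\Area(xypzx)$ is an increasing linear function of $b$; hence it is minimized at the smallest admissible $b$. The admissibility constraint "$S_x(1)$ on the convex side" means the parabola stays on the far side of (or tangent to) the unit circle, which forces $b$ down to the value $b_0$ at which the parabola is tangent to $S_x(1)$ — and at that value the apex touches $S_x(1)$, which is exactly the parabola $p_0$. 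The hypothesis $60^\circ < yxz$ guarantees the chord $yz$ lies outside the closed unit disk, and $yxz < \alpha_0$ keeps the geometry in the range where such a tangent parabola through $y,z$ exists; these are the inequalities that make the family non-degenerate.

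**Main obstacle.** The delicate point is justifying that one may restrict to parabolas symmetric about the perpendicular bisector of $yz$ — a general parabola through two points, with prescribed convexity, forms a two-parameter family (axis direction and width), and the naive one-parameter computation above silently assumes symmetry. So the real work is either a clean symmetrization lemma showing the symmetric tangent parabola beats every skew competitor, or a more careful two-variable optimization verifying that the minimum over the full family still occurs at $p_0$. Since the excerpt attributes this lemma to \cite{bezdek1990maximum} and states it only as a recalled ingredient, I would cite their argument for this reduction and spend my effort on the one-parameter monotonicity, which is the transparent part.
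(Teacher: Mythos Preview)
The paper does not prove this lemma at all: the sentence immediately preceding it is ``The following lemma is proved in \cite{bezdek1990maximum},'' and no argument is given. So there is nothing in the paper to compare your sketch against; you have already gone further than the author does here.

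That said, two small corrections to your sketch. First, the direction of your chord claim is reversed: with $y,z$ on $S_x(2/\sqrt{3})$ and half-angle $\theta/2$, the midpoint of $yz$ sits at distance $(2/\sqrt{3})\cos(\theta/2)$ from $x$, which equals $1$ exactly at $\theta=60^\circ$ and is \emph{less} than $1$ for $\theta>60^\circ$. So the hypothesis $yxz>60^\circ$ guarantees the chord $yz$ \emph{meets} the open unit disk, not that it avoids it; this is precisely why a genuine parabola (not the degenerate chord) is needed and why your $b_0>0$. Second, the upper bound $yxz<\alpha_0$ is not what governs existence of the apex-tangent parabola $p_0$; $p_0$ exists for a wider range of angles. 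The bound $\alpha_0$ enters elsewhere in the paper (Lemma~\ref{key2}) as a geometric constraint on which arcs can actually occur on the boundary of a Dirichlet slice, and is used in the area computations of \cite[\S6]{bezdek1990maximum}, not merely for well-posedness of $p_0$.

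Your identification of the main obstacle is on target: your one-parameter monotonicity argument is clean and correct for parabolas whose axis is the perpendicular bisector of $yz$, but the lemma as stated allows arbitrary parabolas through $y,z$ with the unit disk on the convex side, and that family is genuinely two-dimensional. Note that $p_0$ itself is automatically symmetric (an apex-tangent parabola has its axis along the radius at the tangency point, and passing through two points at equal distance $2/\sqrt{3}$ from $x$ forces that radius to bisect $yz$), so the question really is whether a skew competitor $p$ can beat the symmetric extremum. Your proposed symmetrization---reflecting $p$ across the bisector and averaging---does not by itself close the gap, since the average of two parabolas is not a parabola; one needs either the full two-parameter optimization or a sharper comparison argument. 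Deferring that step to \cite{bezdek1990maximum}, as both you and the paper do, is appropriate.
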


\subsection{Truncating and rearranging}
Consider the Dirichlet slice $d_x$ of a point $x$ satisfying the conditions of Proposition \ref{qual}.  The following steps produce a region with no greater area than that of $d_x$.

\emph{Step 1}: The boundary of $d_x$ intersects $S_x(2/\sqrt{3})$ in a finite number of points.  Label them $y_1, y_2, \dots y_n, y_{n+1} = y_1$ in clockwise order.  Intersect $d_x$ with $S_x(2/\sqrt{3})$ and call the new region $d_x^*$.

By Lemma \ref{key1}, this is a region bounded by arcs of $S_x(2/\sqrt{3})$, parabolic arcs and line segments, with non-analytic points on $S_x(2/\sqrt{3})$. 

\emph{Step 2}: For $i = 1, 2, \dots , n$ if $y_ixy_{i+1} > 60^\circ$ and if  the boundary of $d_x^*$ between $y_i$ and $y_{i+1}$  is a circular arc of $S_x(2/\sqrt{3})$, then introduce additional vertices $z_{i_1}, z_{i_2}, \dots z_{i_k}$ on the circular arc $y_iy_{i+1}$ so that the polygonal arc $y_iz_{i_1} z_{i_2} \dots z_{i_k}y_{i+1}$ does not intersect $S_x(1)$.  Relabel the set of vertices $v_1, v_2, \dots v_m, v_{m+1} = v_1$ in clockwise order.

\emph{Step 3}: If $v_ixv_{i+1} \le 60^\circ$ then truncate $d_x^*$ along the line segment $v_iv_{i+1}$ keeping the part of $d_x^*$ which contains $S_x(1)$. This does not increase area by construction. If $v_ixv_{i+1} > 60^\circ$ then $v_iv_{i+1}$ is a parabolic arc.  Replace it by the parabolic arc through $v_i$ and $v_{i+1}$ touching $S_x(1)$ at its apex. This does not increase area by Lemma \ref{key2}.  This new region $d_x^{**}$ has smaller area than $d_x,$ contains $S_x(1),$ and bounded by line segments and parabolic arcs touching $S_x(1)$ at their apexes, with all non-analytic points of the boundary on $S_x(2/\sqrt{3})$.  If consecutive non-analytic points on the boundary have interior angle no greater than $60^\circ$, they are joined by line segments.  If they have interior angle between $60^\circ\negthinspace$ and $\alpha_0$, they are joined by a parabolic arc.  

The following lemma is a consequence of \cite[\S 6]{bezdek1990maximum}, which determines the minimum area of such a region.

\begin{Lemma}
The region $d_x^{**}$ has area at least $\sqrt{12}.$
\end{Lemma}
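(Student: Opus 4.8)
The plan is to cut $d_x^{**}$ into sectors centred at $x$ and bound the area of each sector below by a \emph{linear} function of its central angle. Since $S_x(1)\subseteq d_x^{**}$, the point $x$ lies in the interior of $d_x^{**}$, so the rays from $x$ through the non-analytic boundary points $v_1,\dots,v_m$ (which all lie on $S_x(2/\sqrt{3})$) cut $d_x^{**}$ into sectors $T_1,\dots,T_m$ with pairwise disjoint interiors. Write $\theta_k$ for the angle $v_kxv_{k+1}$. By the construction of $d_x^{**}$ together with Lemma \ref{key2}, the $\theta_k$ satisfy $\sum_k\theta_k=2\pi$ and each $\theta_k\in(0,\alpha_0]$, and the side of $T_k$ opposite $x$ is the chord $v_kv_{k+1}$ of $S_x(2/\sqrt{3})$ when $\theta_k\le\pi/3$ and a parabolic arc through $v_k,v_{k+1}$ tangent to $S_x(1)$ at its apex when $\pi/3<\theta_k\le\alpha_0$. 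As $\Area(d_x^{**})=\sum_k\Area(T_k)$ and $\sum_k\theta_k=2\pi$, it is enough to prove the pointwise estimate
\[
\Area(T_k)\ \ge\ \frac{\sqrt{12}}{2\pi}\,\theta_k\ =\ \frac{\sqrt{3}}{\pi}\,\theta_k ,
\]
since summing then gives $\Area(d_x^{**})\ge\frac{\sqrt{3}}{\pi}\cdot 2\pi=\sqrt{12}$ (equality being the regular hexagon, six equilateral sectors of angle $60^\circ$).

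For a chord sector ($\theta_k\le\pi/3$) the two bounding radii have length $2/\sqrt{3}$, so $\Area(T_k)=\tfrac12(2/\sqrt{3})^2\sin\theta_k=\tfrac23\sin\theta_k$. Since $\sin$ is concave on $[0,\pi]$, its graph over $[0,\pi/3]$ lies above the chord joining $(0,0)$ to $(\pi/3,\sqrt{3}/2)$; that is, $\sin\theta\ge\frac{3\sqrt{3}}{2\pi}\theta$ for $0\le\theta\le\pi/3$, and multiplying by $\tfrac23$ gives $\Area(T_k)\ge\frac{\sqrt{3}}{\pi}\theta_k$, with equality at $\theta_k=\pi/3$. (Since $\tfrac23>\frac{\sqrt 3}{\pi}$, arbitrarily thin sectors also satisfy the bound.)

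For a parabolic sector ($\pi/3<\theta_k\le\alpha_0$) I would compute the area explicitly: place $x$ at the origin with the parabola's axis along the positive first coordinate axis, its apex at $(1,0)$, and $v_k=\tfrac{2}{\sqrt{3}}(\cos\tfrac{\theta_k}{2},\pm\sin\tfrac{\theta_k}{2})$. The parabola is then $u=1-cy^{2}$ with $c=(3-2\sqrt{3}\cos(\theta_k/2))/(4\sin^{2}(\theta_k/2))>0$, and because the two straight edges of $T_k$ pass through the origin, a short Green's-theorem computation yields
\[
\Area(T_k)=A(\theta_k):=\frac{8}{3\sqrt{3}}\sin\tfrac{\theta_k}{2}-\frac{2}{9}\sin\theta_k ,
\]
which reduces to $\tfrac23\sin\theta_k$ at $\theta_k=\pi/3$, matching the chord formula at the junction.

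Finally, set $g(\theta)=A(\theta)-\frac{\sqrt{3}}{\pi}\theta$ on $[\pi/3,\alpha_0]$. Then $g(\pi/3)=\tfrac{1}{\sqrt 3}-\tfrac{1}{\sqrt 3}=0$, and differentiating twice gives $g''(\theta)=A''(\theta)=\sin\tfrac{\theta}{2}\bigl(\tfrac49\cos\tfrac{\theta}{2}-\tfrac{2}{3\sqrt{3}}\bigr)$, which is $\le 0$ exactly when $\cos\tfrac{\theta}{2}\le\tfrac{\sqrt 3}{2}$, i.e. for $\theta\ge\pi/3$. Hence $g$ is concave on $[\pi/3,\alpha_0]$, so it lies above the chord through its endpoint values; since $g(\pi/3)=0$, this makes $g\ge 0$ throughout $[\pi/3,\alpha_0]$ provided $g(\alpha_0)\ge 0$, i.e.
\[
\frac{4(\sqrt{3}+1)}{9}\sqrt{2\sqrt{3}-3}\ \ge\ \frac{2\sqrt{3}}{\pi}\arccos(\sqrt{3}-1).
\]
This last inequality is the main obstacle: it is true but only just (the two sides are approximately $0.8272$ and $0.8264$), so it must be checked via a careful numerical bound on $\arccos(\sqrt{3}-1)$ rather than any soft comparison. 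Granting it, the pointwise estimate holds for both types of sector and summation over $k$ finishes the proof; equivalently, one may invoke the area minimization of \cite[\S 6]{bezdek1990maximum}, of which the computation above is the explicit form.
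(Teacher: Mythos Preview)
Your argument is correct and is exactly the approach the paper invokes: the paper does not give its own proof here but simply cites \cite[\S6]{bezdek1990maximum}, and your sector decomposition with the linear-in-angle lower bound $\Area(T_k)\ge\frac{\sqrt3}{\pi}\theta_k$ is precisely the content of that section made explicit. Your area formula $A(\theta)=\tfrac{8}{3\sqrt3}\sin\tfrac{\theta}{2}-\tfrac{2}{9}\sin\theta$ for the parabolic sector, the concavity computation, and the matching at $\theta=\pi/3$ are all right; the one soft spot is the endpoint inequality at $\alpha_0$, which you correctly flag as numerically tight ($0.8272\ldots$ versus $0.8264\ldots$) and which in a fully rigorous write-up would need an interval-arithmetic or Taylor-bound verification of $\arccos(\sqrt3-1)$ rather than a decimal check.
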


Proposition \ref{qual} follows.

\section{Decomposition of $B(R)$ and Density Calculation}\label{part2}
\subsection{Decomposition}
Fix a packing $\mathscr{C}$.  Fix $R \ge 2/\sqrt{3}$ and restrict to $\mathscr{C}^*\hspace{-3pt}.$
Let the set $A$ be the union of the axes $a_i$ over $I^*\hspace{-3pt}.$  Let $\mathrm{d}\mu$ be the 1-dimensional Hausdorff measure on $A$.  Let $X$ be the subset of qualified points of $A$.  Let $Y$ be the subset of $A$ given by $\{x \in A : B_x(\frac{4}{\sqrt{3}}) \textrm{ contains no ends}\}.$  Let $Z$ be the subset of $A$ given by $\{x\in A : B_x(\frac{4}{\sqrt{3}}) \textrm{ contains an end}\}.$  Notice that $Y\subseteq X \subseteq A$ from Proposition \ref{qual} and $Z = A-Y$ by definition.  

The sets are $A$, $X$, $Y$, and $Z$ are measurable.  The set $A$ is just a finite disjoint union of lines in $\mathbb{R}^3$.  The area of the Dirichlet slice $d_x$ is piecewise continuous on $A$, so $X$ is a Borel subset of $A$. Similarly the conditions defining $Y$ and $Z$ make them Borel subsets of $A$.  The ball $B(R)$ is finite volume, so $I^*$ has some finite cardinality $n$.

Decompose $B(R)$ into the regions $\{D^0_i\}_{i=1}^n$, $\{D^1_i\}_{i=1}^n$ and $\{D^2_i\}_{i=1}^n$.  Further decompose the regions $\{D^0_i\}_{i=1}^n$ into Dirichlet slices $d_x$, where $x$ is an element of $A.$

\subsection{Density computation}
From the definition of density,

\begin{align*}
&\rho(\mathscr{C},R-2/\sqrt{3},R) = 
\frac{\sum_{I^*}\Vol(C_i^0)+\sum_{I^*}\Vol(C_i^1)+\sum_{I^*}\Vol(C_i^2)}{\sum_{I^*}\Vol(D_i^0)+\sum_{I^*}\Vol(D_i^1)+\sum_{I^*}\Vol(D_i^2)}
\end{align*}

as $C_i^j \subseteq D_i^j$, and $\Vol(C_i^0) = t\pi$, and $\Vol(C_i^1)=\Vol(C_i^2) = \frac{2}{3}\pi$, it follows that 

\begin{equation}\label{inequality}
\rho(\mathscr{C},R-2/\sqrt{3},R) \le \frac{nt\pi+n\frac{4}{3}\pi}{\sum_{I^*}\Vol(D_i^0)+n\frac{4}{3}\pi}.
\end{equation}

Finally, $\rho(\mathscr{C},R-2/\sqrt{3},R)$ is explicitly bounded by the following lemma.

\begin{Lemma}\label{bound} For $t \ge 2t_0$,
$$\sum_{I^*} \Vol(D_i^0) \ge n(\sqrt{12}(t-2t_0) + \pi(2t_0)).$$
\end{Lemma}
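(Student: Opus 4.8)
The plan is to estimate $\Vol(D_i^0)$ from below by integrating the area of the Dirichlet slices $d_x$ over the portion of the axis $a_i$ lying in the slab $L_i$, and to split this integral according to whether $x$ is qualified. Recall that $D_i^0 = D_i \cap L_i$ decomposes into Dirichlet slices $d_x$ as $x$ ranges over the length-$t$ segment $a_i$, so by Fubini/Cavalieri,
$$\Vol(D_i^0) = \int_{a_i} \Area(d_x)\, \mathrm{d}\mu(x).$$
Split $a_i$ into $a_i \cap Y$ (points whose $\tfrac{4}{\sqrt3}$-ball contains no ends) and $a_i \cap Z$ (the rest). By Proposition~\ref{qual}, every $x \in a_i \cap Y$ is qualified, so $\Area(d_x) > \sqrt{12}$ there. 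On $a_i \cap Z$ we cannot say anything better than $\Area(d_x) \ge \pi$, since $D_i^0 \supseteq C_i^0$ forces each slice to contain the unit disc $S_x(1)$ together with its interior. Thus
$$\Vol(D_i^0) \ge \sqrt{12}\,\mu(a_i \cap Y) + \pi\,\mu(a_i \cap Z) = \sqrt{12}\, t - \left(\sqrt{12} - \pi\right)\mu(a_i \cap Z),$$
using $\mu(a_i) = t$. Summing over $i \in I^*$ gives $\sum_{I^*}\Vol(D_i^0) \ge \sqrt{12}\,nt - (\sqrt{12}-\pi)\,\mu(Z)$, so it remains to bound $\mu(Z) \le 2t_0\, n$, i.e.\ to show the set of axis points within $\tfrac{4}{\sqrt3}$ of some end has total length at most $2t_0$ per cylinder.

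The second step is this combinatorial/packing bound on $\mu(Z)$. Each capped $t$-cylinder $C_j$ in $\mathscr{C}^*$ contributes exactly two ends. A point $x \in A$ lies in $Z$ precisely when some end $e$ satisfies $|x - e| < \tfrac{4}{\sqrt3}$. For a fixed end $e$, the key geometric claim is that the set of axis points (over all cylinders in the packing) within distance $\tfrac{4}{\sqrt3}$ of $e$ has $1$-dimensional Hausdorff measure at most some absolute constant $c$. Then $\mu(Z) \le (\text{number of ends}) \cdot c = 2nc$, and one identifies $c$ with $t_0 = \tfrac{4}{3}(\tfrac{4}{\sqrt3}+1)^3$. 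To prove the claim: any axis point within $\tfrac{4}{\sqrt3}$ of $e$ lies in $B_e(\tfrac{4}{\sqrt3})$, hence the corresponding cylinder $C_j$ meets $B_e(\tfrac{4}{\sqrt3})$; since $C_j$ has unit radius, its intersection with the concentric ball $B_e(\tfrac{4}{\sqrt3}+1)$ contains a cylindrical chunk of volume at least $\pi$ times the length of that portion of its axis, and these chunks have disjoint interiors. Comparing the total such volume to $\Vol(B_e(\tfrac{4}{\sqrt3}+1)) = \tfrac{4}{3}\pi(\tfrac{4}{\sqrt3}+1)^3$ yields that the total axis length inside $B_e(\tfrac{4}{\sqrt3})$ is at most $\tfrac{4}{3}(\tfrac{4}{\sqrt3}+1)^3 = t_0$. (One must be mildly careful: an axis segment near $e$ might poke slightly outside $B_e(\tfrac{4}{\sqrt3}+1)$ near its far end, but the portion within $\tfrac{4}{\sqrt3}$ of $e$ stays well inside, and the unit-radius tube around it stays within $B_e(\tfrac{4}{\sqrt3}+1)$, which is all that the volume comparison needs.)

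Combining, $\sum_{I^*}\Vol(D_i^0) \ge \sqrt{12}\,nt - (\sqrt{12}-\pi)\cdot 2t_0 n = n\big(\sqrt{12}(t - 2t_0) + 2\pi t_0\big)$, which is exactly the asserted inequality $n(\sqrt{12}(t-2t_0) + \pi(2t_0))$; the hypothesis $t \ge 2t_0$ is what keeps the first factor nonnegative so the bound is not vacuous. The main obstacle is the $\mu(Z)$ estimate — specifically making the volume-packing argument airtight about which axis segments are actually captured by the ball $B_e(\tfrac{4}{\sqrt3}+1)$ and checking that the constant that falls out is precisely $t_0$ rather than something slightly larger; everything else is the elementary layer-cake integration and the direct appeal to Proposition~\ref{qual}.
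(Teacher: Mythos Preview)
Your proposal is correct and follows essentially the same approach as the paper: express $\sum_{I^*}\Vol(D_i^0)$ as an integral of slice areas over the union of axes, split into the set $Y$ (where Proposition~\ref{qual} gives area $\ge\sqrt{12}$) and its complement $Z$ (where the trivial bound $\pi$ holds), and then bound $\mu(Z)\le 2nt_0$ by the volume-packing argument in $B_e(4/\sqrt3+1)$ around each end. Your write-up is in fact more explicit than the paper's about why the unit-radius tube around any axis segment in $B_e(4/\sqrt3)$ stays inside $B_e(4/\sqrt3+1)$, which is exactly the point that makes the constant come out to $t_0$.
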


\begin{proof}
The sum $\sum_{I^*}\hspace{-2pt}\Vol(D_i^0)$ may be written as an integral of the area of the Dirichlet slices $d_x $ over $A$

\begin{equation*}\sum_{I^*} \Vol(D_i^0) \hspace{2pt}= \int\limits_A \Area(d_x) \,\mathrm{d}\mu. \end{equation*}

Using the area estimates from Proposition \ref{qual}, there is an inequality

\begin{equation*}\int\limits_A \Area(d_x) \,\mathrm{d}\mu \ge \int\limits_{X} \sqrt{12}\,\mathrm{d}\mu +\hspace{-7pt} \int\limits_{A-X}\hspace{-4pt} \pi \,\mathrm{d}\mu.\end{equation*}

As $\sqrt{12} > \pi$ and the integration is over a region $A$ with $\mu(A)< \infty$, passing to the subset $Y \subseteq X$ gives

$$\int\limits_{X} \sqrt{12}\,\mathrm{d}\mu +\hspace{-7pt}  \int\limits_{A-X} \hspace{-4pt}\pi \,\mathrm{d}\mu \ge \int\limits_{Y} \sqrt{12}\,\mathrm{d}\mu +\hspace{-7pt}  \int\limits_{A-Y} \hspace{-4pt}\pi \,\mathrm{d}\mu \hspace{3pt}=\hspace{-2pt} \int\limits_{A-Z} \sqrt{12}\,\mathrm{d}\mu+\hspace{-4pt}  \int\limits_{Z} \pi\,\mathrm{d}\mu.$$

The measure of $Z$ is the measure of the subset of $A$ that is contained in all the balls of radius $4/\sqrt{3}$ about all the ends of all the cylinders in the packing.  This is bounded from above by considering the volume of cylinders contained in balls of radius $4/\sqrt{3}+1.$  If the cylinders completely filled the ball, they would contain at most axis length $\frac{4}{3}(\frac{4}{\sqrt{3}}+1)^3 = t_0$. As each cylinder has two ends, there are at worst $2n$ disjoint balls to consider.  Therefore $2nt_0 \ge \mu(Z).$ 

Provided $t\ge 2t_0$, we have the inequality

$$\int\limits_{A-Z} \sqrt{12}\,\mathrm{d}\mu+\hspace{-4pt}  \int\limits_{Z} \pi\,\mathrm{d}\mu \ge (nt-2nt_0)\sqrt{12} + 2n(t_0)\pi.$$
\end{proof}

By combining inequality (\ref{inequality}) with Lemma \ref{bound} and simplifying, it follows that 
\begin{align*}
\rho(\mathscr{C},R-2/\sqrt{3},R) \le \frac{t+\frac{4}{3}}{\frac{\sqrt{12}}{\pi}(t-2t_0) + (2t_0)+\frac{4}{3}}
\end{align*}

for an arbitrary packing $\mathscr{C}$ of $\mathbb{R}^3$ by capped congruent $t$-cylinders.  
\section{Conclusions, applications, further questions}

\subsection{A rule of thumb, a dominating hyperbola}
For $t \ge 0$, the upper bounds for the density of packings by capped and uncapped $t$-cylinders are dominated by curves of the form $\frac{\pi}{\sqrt{12}} + N/t$.  Numerically, one finds a useful rule of thumb:

\begin{theorem}
 The upper density $\rho^+$ of a packing $\mathscr{C}$ of $\mathbb{R}^3$ by capped $t$-cylinders satisfies
$$\rho^+(\mathscr{C}) \le \frac{\pi}{\sqrt{12}} + \frac{10}{t}.$$
\end{theorem}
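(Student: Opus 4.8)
The plan is to deduce this from Theorem~\ref{main} (equivalently, from the Corollary giving the bound for capped $t$-cylinders) together with the trivial estimate $\rho^+(\mathscr{C})\le 1$, splitting according to whether $t\ge 2t_0$ or not. Throughout write $c=\frac{\sqrt{12}}{\pi}=\frac{2\sqrt3}{\pi}\approx 1.10266$, so that $\frac1c=\frac{\pi}{\sqrt{12}}\approx 0.90690$ and $c>1$.

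\emph{Case $0<t\le 2t_0$.} Here only the trivial bound $\rho^+(\mathscr{C})\le 1$ is available, so it is enough to check that $1\le \frac{\pi}{\sqrt{12}}+\frac{10}{t}$ on this range. The right-hand side is decreasing in $t$, so the extreme case is $t=2t_0$, where it equals $\frac{\pi}{\sqrt{12}}+\frac{5}{t_0}$; since $t_0=48.3266\dots$ this is about $0.90690+0.10346>1$. Equivalently, the inequality holds on the whole range because $2t_0<\frac{10}{\,1-\pi/\sqrt{12}\,}=107.4\dots$; the threshold $2t_0$ falls just below the value at which the constant $10$ would fail, which is exactly what makes the round constant work.

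\emph{Case $t\ge 2t_0$.} Here the Corollary gives $\rho^+(\mathscr{C})\le f(t)$ with $f(t)=\frac{t+\frac43}{c(t-2t_0)+2t_0+\frac43}$. Writing the denominator as $ct+b$, where $b=\frac43-2t_0(c-1)$ (note $b<0$), a one-line partial-fraction rearrangement gives
$$f(t)=\frac1c+\frac{(c-1)\bigl(2t_0+\tfrac43\bigr)}{c\,(ct+b)}.$$
For $t\ge 2t_0$ one has $ct+b\ge t$ --- equivalently $t\ge 2t_0-\frac{4}{3(c-1)}$, which holds since the right side is smaller than $2t_0$ --- whence $f(t)\le \frac1c+\frac1t\cdot\frac{(c-1)(2t_0+\frac43)}{c}$. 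It then suffices to verify the numerical inequality $\frac{(c-1)(2t_0+\frac43)}{c}\le 10$; substituting $c=\frac{2\sqrt3}{\pi}$ and $t_0=48.3266\dots$ makes the left side about $9.12$, so $f(t)\le \frac{\pi}{\sqrt{12}}+\frac{10}{t}$, as required.

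There is no new geometry here: the content is entirely the reduction to the Corollary, the elementary identity for $f$, and two numerical checks, so the main ``obstacle'' is only careful bookkeeping with the constants. The one point worth stating explicitly is that $2t_0$ genuinely lies below $10/(1-\pi/\sqrt{12})$, so that the trivial bound covers the low-aspect-ratio range; in fact the same computation shows $t\,(f(t)-\pi/\sqrt{12})$ is decreasing on $[2t_0,\infty)$ with supremum $2t_0(1-\pi/\sqrt{12})<9$, so the theorem even holds with $10$ replaced by $9$, and the constant $10$ is chosen merely for cleanliness.
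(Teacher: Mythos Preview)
Your argument is correct and is exactly the computation the paper has in mind: it states the theorem as a ``rule of thumb'' found numerically by observing that the bound from the Corollary is dominated by $\pi/\sqrt{12}+N/t$, without writing out the details, so your two-case split (trivial bound for $t\le 2t_0$, Corollary plus partial fractions for $t\ge 2t_0$) is a faithful and more explicit rendering of the same idea. Your side remark that the constant $9$ also works is correct but extremely tight---$2t_0(1-\pi/\sqrt{12})\approx 8.998$---so it is worth keeping the intermediate computations to a few more decimals if you state it.
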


\begin{theorem}
 The upper density $\rho^+$ of a packing $\mathscr{C}$ of $\mathbb{R}^3$ by $t$-cylinders satisfies
$$\rho^+(\mathscr{C}) \le \frac{\pi}{\sqrt{12}} + \frac{10}{t}.$$
\end{theorem}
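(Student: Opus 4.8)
The plan is to deduce this clean "rule of thumb" bound directly from the corollary to Theorem \ref{main}, by showing that the rational function appearing there is dominated, for $t$ in the admissible range, by the hyperbola $\frac{\pi}{\sqrt{12}} + \frac{10}{t}$, and then to handle small $t$ with the trivial bound $\rho^+ \le 1$. Recall that for $t \ge 2t_0$ we already have
$$\rho^+(\mathscr{C}) \le \frac{t+\frac{4}{3}}{\frac{\sqrt{12}}{\pi}(t-2t_0) + 2t_0 + \frac{4}{3}}.$$
Write $c = \sqrt{12}/\pi$ and rewrite the denominator as $c\,t + (1-c)\,2t_0 + \frac{4}{3}$; since $c > 1$, the constant term $(1-c)2t_0 + \frac43$ is a fixed negative number, call it $-K$ with $K = (c-1)2t_0 - \frac43 > 0$. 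So the bound reads $\frac{t + \frac43}{c\,t - K}$. Dividing numerator and denominator by $c$, this is $\frac1c \cdot \frac{t + \frac43}{t - K/c} = \frac1c\left(1 + \frac{\frac43 + K/c}{t - K/c}\right) = \frac{\pi}{\sqrt{12}} + \frac1c\cdot\frac{\frac43 + K/c}{t - K/c}$.

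So it suffices to check that for all $t \ge 2t_0$,
$$\frac1c\cdot\frac{\frac43 + K/c}{t - K/c} \le \frac{10}{t},$$
equivalently $t\left(\frac43 + K/c\right) \le 10\,c\,(t - K/c) = 10ct - 10K$, i.e. $t\left(10c - \frac43 - K/c\right) \ge 10K$. Both $K$ and the coefficient $10c - \frac43 - K/c$ are explicit constants (here $t_0 = \frac{4}{3}\left(\frac{4}{\sqrt3}+1\right)^3$, $c = \sqrt{12}/\pi$), and the inequality is linear in $t$, so it reduces to verifying a single numerical inequality at the endpoint $t = 2t_0$ (after confirming the coefficient of $t$ is positive, so the left side is increasing). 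This is a finite computation: one plugs in $t_0 \approx 48.33$, $c \approx 1.1027$, finds $K \approx 8.61$, checks $10c - \frac43 - K/c \approx 11.03 - 1.33 - 7.81 \approx 1.89 > 0$, and then $2t_0 \cdot 1.89 \approx 182.7 \ge 10K \approx 86.1$, with ample room to spare. The same argument covers the uncapped case verbatim using the uncapped corollary, whose bound $\frac{t}{c(t-2-2t_0)+2t_0+\frac43}$ has a numerator even smaller and a denominator of the same form with $t_0$ replaced by $t_0+1$, so the identical reduction applies and the margin only improves.

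Finally, for $2 \le t < 2t_0$ (and indeed all $t < 2t_0$) the claimed bound is automatic: since $t < 2t_0 \le 2\cdot 48.33 < 100$ one has $\frac{10}{t} > \frac{10}{100} = 0.1$, and combined with $\frac{\pi}{\sqrt{12}} \approx 0.9069$ the right-hand side exceeds... — more carefully, the cleanest statement is that $\frac{\pi}{\sqrt{12}} + \frac{10}{t} \ge 1$ precisely when $t \le \frac{10}{1 - \pi/\sqrt{12}} = \frac{10}{1-\pi/\sqrt{12}} \approx 107.4$, which comfortably includes the whole range $t < 2t_0 \approx 96.7$; on that range $\rho^+(\mathscr{C}) \le 1 \le \frac{\pi}{\sqrt{12}} + \frac{10}{t}$ trivially. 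Thus the bound holds for all $t > 0$. There is no real obstacle here: the only "hard" part is bookkeeping with the explicit constants to confirm the single endpoint inequality, and the slack (roughly a factor of two) makes even a crude estimate sufficient.
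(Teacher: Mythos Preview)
Your approach is exactly what the paper has in mind: the paper only says ``numerically, one finds'' that the rational bounds are dominated by $\pi/\sqrt{12}+N/t$, and you are supplying the routine verification. The reduction to a single linear inequality in $t$, checked at the endpoint, is the right way to do it, and your treatment of the range $t<2t_0$ via the trivial bound $\rho^+\le 1$ is clean.

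One slip to fix: the statement you are asked to prove is the \emph{uncapped} case, and your remark that for the uncapped corollary ``the denominator [is] of the same form with $t_0$ replaced by $t_0+1$'' and ``the margin only improves'' is not accurate. Writing the uncapped denominator as $ct-K'$ gives $K'=(c-1)2t_0+2c-\tfrac43\approx 10.80$, not $K+2(c-1)$; and since the numerator is $t$ rather than $t+\tfrac43$, the excess over $\pi/\sqrt{12}$ is $\tfrac{1}{c}\cdot\tfrac{K'/c}{t-K'/c}$. The resulting endpoint inequality at $t=2t_0+2$ reads roughly $98.7\times 1.24\approx 122\ge 10K'\approx 108$, so it still holds, but the margin is \emph{tighter} than in the capped case, not looser. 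Replace the handwave with this one-line computation and the proof is complete.
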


\subsection{Examples}
While the requirement that $t$ be greater than $2t_0$ for a non-trivial upper bound is inconvenient,
the upper bound converges rapidly to $\pi/\sqrt{12} = 0.9069\dots$ and is nontrivial for tangible objects, as illustrated in the table below.
\\\\
 \begin{tabular}{cccccccc}
&Item &  Length & Diameter & t & Density $\le$& \\ \midrule \hline 
&Broomstick & 1371.6mm & 25.4mm & 108 & 0.9956...&\\ \hline
&20' PVC Pipe & 6096mm & 38.1mm & 320& 0.9353...& \\ \hline
&Capellini & 300mm & 1mm & 600& 0.9219... &\\\hline
&Carbon Nanotube & -& -&$2.64 \times 10^8$ \cite{wang2009fabrication} & 0.9069... & \\  \hline
\end{tabular}
\\\\
\subsection{Some further results}
There are other conclusions to be drawn.  For example: \hspace{-2pt}Consider the density of a packing $\mathscr{C} = \{C_i\}_{i \in I}$ of $\mathbb{R}^3$ by congruent unit radius circular cylinders $C_i$, possibly of infinite length.  The upper density $\gamma^+$ of such a packing may be written

$$ \gamma^+(\mathscr{C}) = \limsup_{r \rightarrow \infty} \sum_I\frac{\Vol(C_i\cap B_0(r))}{\Vol(B_0(r))} $$

and coincides with $\rho^+(\mathscr{C})$ when the lengths of $C_i$ are uniformly bounded. 
 \begin{theorem}
The upper density $\gamma$ of half-infinite cylinders is exactly $\pi/\sqrt{12}.$
 \end{theorem}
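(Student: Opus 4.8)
The plan is to prove the two inequalities $\gamma \ge \pi/\sqrt{12}$ and $\gamma \le \pi/\sqrt{12}$ separately. The lower bound is immediate: start from the hexagonal packing of $\mathbb{R}^3$ by parallel bi-infinite unit cylinders, which has density $\pi/\sqrt{12}$, and cut every cylinder in two by a single plane normal to the common axis direction. Each of the two pieces is a half-infinite unit cylinder, the interiors remain pairwise disjoint, and the occupied point set is unchanged, so the resulting packing by half-infinite cylinders still has upper density $\pi/\sqrt{12}$.

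For the upper bound, fix any packing $\mathscr{C}=\{C_i\}$ of $\mathbb{R}^3$ by congruent half-infinite unit cylinders and a large radius $r$, and bound $\sum_i\Vol(C_i\cap B_0(r))/\Vol(B_0(r))$. I would first observe that the qualified-point apparatus of Sections \ref{setup} and \ref{part1} carries over verbatim: a half-infinite cylinder is still the set of points within $1$ of its axis ray, has unit-disk normal cross sections, contains a unit ball at every axis point more than $1$ from the end, and the loci equidistant from two axes are still parabolic spline cylinders, so Proposition \ref{qual} holds --- if $B_x(4/\sqrt{3})$ contains no end of $\mathscr{C}$, then $\Area(d_x)\ge\sqrt{12}$. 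Next, form the Dirichlet cells $D_i\subseteq B_0(r)$ with respect to the axis rays of all cylinders meeting $B_0(r)$ and slice them by normal planes $P_x$ as in Section \ref{setup}. When $x\in a_i\cap B_0(r-4/\sqrt{3})$ and $B_x(4/\sqrt{3})$ contains no end, the slice $C_i\cap P_x$ is a full unit disk, so $\Area(C_i\cap P_x)=\pi\le\tfrac{\pi}{\sqrt{12}}\Area(d_x)$; for every other $x$ one uses only the crude bound $\Area(C_i\cap B_0(r)\cap P_x)\le\pi$. Integrating along each axis and summing over $i$ gives
$$\sum_i\Vol(C_i\cap B_0(r)) \ \le\ \tfrac{\pi}{\sqrt{12}}\sum_i\Vol(D_i)+\pi\,\mu(E_r)\ \le\ \tfrac{\pi}{\sqrt{12}}\Vol(B_0(r))+\pi\,\mu(E_r),$$
where $\mu$ is $1$-dimensional Hausdorff measure on the union of axes and $E_r$ collects the axis points in $B_0(r)$ lying within $4/\sqrt{3}$ of an end or within $4/\sqrt{3}$ of $\partial B_0(r)$.

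The crux, which replaces Lemma \ref{bound}, is the estimate $\mu(E_r)=O(r^2)$. The part of $E_r$ near $\partial B_0(r)$ lies in a shell of bounded width, and hence carries at most $O(r^2)$ total axis length, since the axis length inside any region is at most a fixed multiple of the volume of its unit neighborhood. For the part near ends, the key observation --- the half-infinite analogue of ``ends are rare'' --- is that each cylinder has exactly one end, and if that end lies in $B_0(\rho)$ then its axis ray runs at least $\rho$ before leaving $B_0(2\rho)$, so the cylinder occupies a unit tube of volume at least $\pi\rho$ inside $B_0(2\rho+1)$; as these tubes are pairwise disjoint, there are $O(\rho^2)$ ends in $B_0(\rho)$. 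Since each end contributes at most $t_0$ worth of nearby axis length, by the same ball-volume computation used in the proof of Lemma \ref{bound}, the end part of $\mu(E_r)$ is also $O(r^2)$. Dividing through by $\Vol(B_0(r))$ and letting $r\to\infty$ gives $\gamma^+(\mathscr{C})\le\pi/\sqrt{12}$, and combined with the lower bound, $\gamma=\pi/\sqrt{12}$.

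I expect the main obstacle to be organizational rather than conceptual. The finite-length argument restricts the packing to cylinders lying inside a ball, which is impossible for half-infinite cylinders, so the Dirichlet decomposition of $B_0(r)$ must be set up so that it genuinely accounts for the relevant portions of every cylinder meeting $B_0(r)$, and one must verify that each of the several $O(1)$-width shells entering the estimate contributes only $O(r^2)$. The single conceptual point is that ends occur with frequency quadratic, not cubic, in $r$ --- precisely what makes the finite-$t$ error term $O(1/t)$ vanish in the limit $t=\infty$.
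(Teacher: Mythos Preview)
Your proposal is essentially sound, but it takes a substantially more laborious route than the paper. The paper's proof is a two-line reduction: any packing $\mathscr{C}(\infty)$ by half-infinite cylinders yields, by chopping each cylinder transversally into pieces of length $t$, a packing $\mathscr{C}(t)$ by $t$-cylinders with $\gamma^+(\mathscr{C}(t))\ge\gamma^+(\mathscr{C}(\infty))$; applying the Corollary to Theorem~\ref{main} and letting $t\to\infty$ gives $\gamma^+(\mathscr{C}(\infty))\le\pi/\sqrt{12}$ immediately. No Dirichlet cells, no end-counting, no boundary shells need to be revisited --- the finite-$t$ theorem is used as a black box.

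By contrast, you re-run the entire slicing argument directly on rays, and must then verify that the Type~\rom{1}/\rom{2}/\rom{3} analysis, the separating-hyperplane lemma, and the truncation steps all survive when axes are half-lines and no cylinder is contained in any ball. This is believable, and your key observation --- that a half-infinite cylinder with end in $B_0(\rho)$ contributes at least $\pi\rho$ of volume inside $B_0(2\rho+1)$, forcing only $O(\rho^2)$ ends --- is correct and is exactly the conceptual reason the $O(1/t)$ error vanishes. But the bookkeeping you anticipate is real: for instance, your definition of $E_r$ as axis points \emph{in $B_0(r)$} misses the thin band $B_0(r+1)\setminus B_0(r)$ where slices of $C_i\cap B_0(r)$ can still be nonempty, and you must check that ends relevant to $x\in B_0(r-4/\sqrt{3})$ automatically belong to cylinders meeting $B_0(r)$. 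None of this is fatal, but all of it is avoided by the paper's truncation trick.

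What your approach buys is independence from the explicit form of the density bound: you would get the sharp $\pi/\sqrt{12}$ from the qualitative statement that the error term is $o(r^3)$, without ever needing the constant $t_0$. The paper's approach is shorter but leans on the quantitative Theorem~\ref{main}.
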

 
 \begin{figure}[h!]
  \centering
      \includegraphics[width=0.50\textwidth]{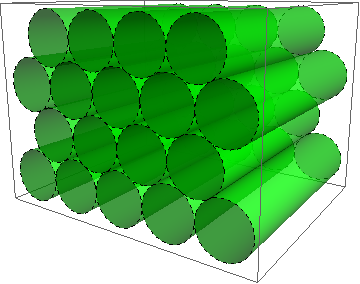}
  \caption{Packing cylinders with high density.}
  \label{obvi}
\end{figure}

 \begin{proof}
 The lower bound is given by the obvious packing $\mathscr{C}'$ with parallel axes (Figure \ref{obvi}) and $\gamma^+(\mathscr{C}') =\pi/\sqrt{12}$. Since a packing $\mathscr{C}(\infty)$ of $\mathbb{R}^3$ by half-infinite cylinders also gives a packing $\mathscr{C}(t)$ of $\mathbb{R}^3$ by $t$-cylinders, the inequality
 $$\frac{t}{\frac{\sqrt{12}}{\pi}(t-2-2t_0) + (2t_0)+\frac{4}{3}} \ge \rho^+(\mathscr{C}(t)) = \gamma^+(\mathscr{C}(t)) \ge \gamma^+(\mathscr{C}(\infty))$$
holds for all $t \ge 2t_0.$
 \end{proof}

 \begin{theorem}
 Given a packing $\mathscr{C} = \{C_i\}_{i \in I}$ by non-congruent capped unit cylinders with lengths constrained to be between $2t_0$ and some uniform upper bound $M$, the density satisfies the inequality  $$\rho^+(\mathscr{C}) \le \frac{t+\frac{4}{3}}{\frac{\sqrt{12}}{\pi}(t-2t_0) + (2t_0)+\frac{4}{3}}$$  where $t$ is the average cylinder length given by  $\liminf_{r \rightarrow \infty} \mu(a_i) / |J|$, where $J$ is the set $\{i\in I: C_i \subseteq B(r)\}$.
 \end{theorem}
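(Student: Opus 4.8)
The plan is to re-run the argument of Theorem~\ref{main} almost verbatim, but keeping the individual cylinder lengths in the bookkeeping, and then to finish with a short monotonicity argument that passes from a per-scale average to the asymptotic average $t$. Throughout, density and upper density are understood via the obvious analogues of the congruent definitions.

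First I would verify that nothing in Sections~\ref{setup}--\ref{part2} actually uses congruence. The Dirichlet-cell decomposition of $B(R)$ and the containments $C_i \subseteq D_i$ use only that each capped unit cylinder is the $1$-neighborhood of its axis and that interiors are disjoint; the separating-hyperplane lemma, Lemmas~\ref{key1} and \ref{key2}, and hence Proposition~\ref{qual}, use only the unit balls contained in the cylinders together with disjointness; and the estimate $\mu(Z) \le 2nt_0$ in Lemma~\ref{bound} is a volume count over balls of radius $4/\sqrt{3}+1$ against the $2n$ ends of the $n := |I^*|$ cylinders of $\mathscr{C}^*$. None of this depends on the lengths, so all of it applies to a packing by non-congruent capped unit cylinders.

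Next I would redo the density computation of Section~\ref{part2} with $\Vol(C_i^0) = \pi t_i$, where $t_i = \mu(a_i)$ is the length of $C_i$, writing $\Sigma := \sum_{i \in I^*} t_i = \mu(A)$. Inequality~(\ref{inequality}) becomes $\rho(\mathscr{C},R-2/\sqrt{3},R) \le (\pi\Sigma + \tfrac{4}{3}\pi n)/(\sum_{I^*}\Vol(D_i^0) + \tfrac{4}{3}\pi n)$, and the proof of Lemma~\ref{bound}, using $\mu(Z) \le 2nt_0$, $\sqrt{12} > \pi$, and the pointwise hypothesis $t_i \ge 2t_0$ (which forces $\Sigma \ge 2nt_0$, so that $\Sigma - 2nt_0 \ge 0$), gives $\sum_{I^*}\Vol(D_i^0) \ge \sqrt{12}(\Sigma - 2nt_0) + 2\pi n t_0$. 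Dividing numerator and denominator by $\pi n$ and setting $\bar t(R) := \Sigma/n$, the average length of the cylinders of $\mathscr{C}$ lying in $B(R - 2/\sqrt{3})$, this reads
$$ \rho(\mathscr{C},R-2/\sqrt{3},R) \le f\big(\bar t(R)\big), \qquad f(s) := \frac{s + \tfrac{4}{3}}{\tfrac{\sqrt{12}}{\pi}(s - 2t_0) + 2t_0 + \tfrac{4}{3}}.$$

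Finally I would let $R \to \infty$. A one-line computation gives $f'(s) = \big(2t_0 + \tfrac{4}{3}\big)\big(1 - \tfrac{\sqrt{12}}{\pi}\big)\big/\big(\tfrac{\sqrt{12}}{\pi}(s-2t_0)+2t_0+\tfrac{4}{3}\big)^2 < 0$, so $f$ is continuous and strictly decreasing on $[2t_0,\infty)$, where its denominator stays positive. Since every length lies in $[2t_0, M]$, each $\bar t(R)$ lies in $[2t_0, M]$, so $t := \liminf_{R\to\infty}\bar t(R)$ is finite; reparametrizing by $r = R - 2/\sqrt{3}$ and $J = I^*$ identifies this $t$ with the average length in the statement. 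As $f$ is continuous and decreasing, $\limsup_{R\to\infty} f(\bar t(R)) = f\big(\liminf_{R\to\infty}\bar t(R)\big) = f(t)$. Combining this with $\rho^+(\mathscr{C}) = \limsup_{R\to\infty}\rho(\mathscr{C},R - 2/\sqrt{3},R)$ — which follows from the same annulus estimate as in the proof of the first corollary, valid here because the lengths, hence the diameters, are bounded by $M$ — yields $\rho^+(\mathscr{C}) \le f(t)$, as claimed. I do not expect a serious obstacle; the one point that requires care is the direction of the monotonicity (a decreasing $f$ converts the $\liminf$ of its argument into the correct upper bound), and the one place the hypotheses really enter is here: the uniform upper bound $M$ makes $\bar t(R)$ bounded so that the $\limsup$/$\liminf$ exchange is legitimate, while the uniform lower bound $2t_0$ keeps $\bar t(R) \ge 2t_0$ so that $\Sigma \ge 2nt_0$ and $f$ is evaluated where the estimate has content.
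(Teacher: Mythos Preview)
Your proposal is correct and follows the same approach as the paper: observe that none of the qualification machinery requires a common length, then replace $nt$ by the total axis length $\Sigma = \mu(A)$ in inequality~(\ref{inequality}) and Lemma~\ref{bound}. Your version is in fact considerably more thorough than the paper's two-sentence proof, in particular your explicit monotonicity argument for $f$ to pass from the per-scale bound to the bound at the asymptotic average $t$ --- a step the paper leaves entirely implicit.
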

 
  \begin{proof}
None of the qualification conditions require a uniform length $t$.  Inequality \ref{inequality} may be considered with respect to the total length of $A$ rather than $nt$. 
 \end{proof}
 It may be easier to compute a bound using the following
 
  \begin{corollary}
 Given a packing $\mathscr{C} = \{C_i\}_{i \in I}$ by non-congruent capped unit cylinders with lengths constrained to be between $2t_0$ and some uniform upper bound $M$, the density satisfies the inequality  $$\rho^+(\mathscr{C}) \le \frac{t+\frac{4}{3}}{\frac{\sqrt{12}}{\pi}(t-2t_0) + (2t_0)+\frac{4}{3}}$$  where $t$ is the infimum of cylinder length.
 \end{corollary}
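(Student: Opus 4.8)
The plan is to reduce this to the preceding theorem using the monotonicity of the right-hand side as a function of $t$. Write
$$f(s) = \frac{s+\frac{4}{3}}{\frac{\sqrt{12}}{\pi}(s-2t_0) + (2t_0)+\frac{4}{3}}, \qquad s \ge 2t_0,$$
and let $\bar t$ denote the average cylinder length appearing in the preceding theorem, so that $\rho^+(\mathscr{C}) \le f(\bar t)$. It then suffices to prove two things: (i) $f$ is nonincreasing on $[2t_0,\infty)$, and (ii) the infimum $t$ of the cylinder lengths satisfies $t \le \bar t$. Granting these, $\rho^+(\mathscr{C}) \le f(\bar t) \le f(t)$, which is exactly the claimed bound.

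For (i), set $c = \frac{\sqrt{12}}{\pi}$ and note $c > 1$ since $\pi^2 < 12$. The denominator of $f$ is $D(s) = cs + (2t_0)(1-c) + \frac{4}{3}$, and the quotient rule shows that $f'(s)$ has the same sign as $D(s) - c\bigl(s+\frac{4}{3}\bigr)$. A short computation collapses this to $(1-c)\bigl(2t_0+\frac{4}{3}\bigr)$, which is negative because $c>1$ and $2t_0+\frac{4}{3}>0$. Hence $f$ is strictly decreasing on its domain; in particular $f(t)\ge f(\bar t)$ whenever $t\le\bar t$. One should also observe that $\bar t$ lies in the interval $[2t_0, M]$ on which the preceding theorem is stated, which is immediate since every individual cylinder length does.

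For (ii), every axis length $\mu(a_i)$ is at least $t$ by definition of $t$ as the infimum of the lengths, so each finite average $\sum_{i\in J}\mu(a_i)/|J|$ is bounded below by $t$; passing to $\liminf_{r\to\infty}$ preserves this lower bound, giving $t\le\bar t$ and completing the argument.

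There is no serious obstacle here: all of the substantive work — the qualification argument of Section \ref{part1} and the slab decomposition and measure estimate of Section \ref{part2} — was already carried out for the preceding theorem, and none of it used a uniform length. The only point requiring a moment's care is the elementary inequality $\pi^2 < 12$, which is precisely what makes $f$ decreasing and therefore permits replacing the average length by the smaller infimum length.
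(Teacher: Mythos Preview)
Your proof is correct and follows exactly the paper's approach: the paper's own proof is the single line ``The stated bound is a decreasing function in $t$, so this follows from the previous theorem,'' and you have simply supplied the explicit verification of that monotonicity (via $\sqrt{12}/\pi>1$) and the trivial observation that the infimum length is no greater than the average.
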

 
\begin{proof}
The stated bound is a decreasing function in $t$, so this follows from the previous theorem.
\end{proof}
 
\subsection{Questions and Remarks}
Similar but much weaker results can be obtained for the packing density of curved tubes by realizing them as containers for cylinders.  Better bounds on tubes would come from better bounds on $t$-cylinders for $t$ small.  There is the conjecture of Wilker, where the expected packing density of congruent unit radius circular cylinders of $any$ length is $exactly$ the planar packing density of the circle, but that is certainly beyond the techniques of this paper. A more tractable extension of this might be to parametrize the densities for capped $t$-cylinders from the sphere to the infinite cylinder by controlling the ends.  In this paper, the analysis assumes anything in a neighborhood of an end packs with density 1, whereas it is expected that the ends and nearby sections of tubes would pack with density closer to $\pi/\sqrt{18}.$ $\hspace{3pt}$  In \cite{torquato2012organizing}, it is conjectured that the densest packing is obtained from extending a dense sphere packing, giving a density bound of

\[ \rho^+(\mathscr{C}(t)) =\frac{\pi}{\sqrt{12}}  \hspace{4pt} \frac{t + \frac{4}{3} } {t + \frac {2\sqrt{6}}{3}  }.\]\\

The structure of high density cylinder packings is also unclear.  For infinite circular cylinders, there are nonparallel packings with positive density \cite{kuperberg1990nonparallel}.  In the case of finite length $t$-cylinders, there exist nonparallel packings with density close to $\pi/\sqrt{12}$, obtained by laminating large uniform cubes packed with parallel cylinders, shrinking the cylinders and perturbing their axes.  It is unclear how or if the alignment of cylinders is correlated with density.  Finally, as the upper bound presented is not sharp, it is still not useable to control the defects of packings achieving the maximal density.  A conjecture is that, for a packing of $\mathbb{R}^3$ by $t$-cylinders to achieve a density of $\pi/\sqrt{12}$, the packing must contain arbitrarily large regions of $t$-cylinders with axes arbitrarily close to parallel.

\bibliography{RodPaperArXiv}

\begin{thebibliography}{1}

\bibitem{bezdek1994remark}
Andras Bezdek.
\newblock A remark on the packing density in the 3-space.
\newblock {\em Intuitive Geometry}, 63:17, 1994.

\bibitem{bezdek1990maximum}
Andras Bezdek and Wlodzimierz Kuperberg.
\newblock Maximum density space packing with congruent circular cylinders of
  infinite length.
\newblock {\em Mathematika}, 37(1):74--80, 1990.

\bibitem{gravel2011upper}
Simon Gravel, Veit Elser, and Yoav Kallus.
\newblock Upper bound on the packing density of regular tetrahedra and
  octahedra.
\newblock {\em Discrete \& Computational Geometry}, 46(4):799--818, 2011.

\bibitem{hales2005proof}
Thomas~C. Hales.
\newblock A proof of the {K}epler conjecture.
\newblock {\em Annals of Mathematics}, 162(3):1065--1185, 2005.

\bibitem{kuperberg1990nonparallel}
Krystyna Kuperberg.
\newblock A nonparallel cylinder packing with positive density.
\newblock {\em Mathematika}, 37(02):324--331, 1990.

\bibitem{kuperberg1991placing}
Wlodzimierz Kuperberg.
\newblock Placing and moving spheres in the gaps of a cylinder packing.
\newblock {\em Elemente der Mathematik}, 46(2):47--51, 1991.

\bibitem{torquato2012organizing}
Salvatore Torquato and Yang Jiao.
\newblock Organizing principles for dense packings of nonspherical hard
  particles: Not all shapes are created equal.
\newblock {\em Physical Review E}, 86(1):011102, 2012.

\bibitem{wang2009fabrication}
Xueshen Wang, Qunqing Li, Jing Xie, Zhong Jin, Jinyong Wang, Yan Li, Kaili
  Jiang, and Shoushan Fan.
\newblock Fabrication of ultralong and electrically uniform single-walled
  carbon nanotubes on clean substrates.
\newblock {\em Nano Letters}, 9(9):3137--3141, 2009.

\end{thebibliography}
\end{document}